\newtheorem{theorem}{Theorem}
\newtheorem{corollary}[theorem]{Corollary}
\newtheorem{lemma}[theorem]{Lemma}
\newtheorem{proposition}[theorem]{Proposition}
\newtheorem{definition}[theorem]{Definition}
\newtheorem{remark}[theorem]{\it Remark}
\def\NN{\mathbb N}
\def\N{\mathbb N}
\def\R{\mathbb R}
\def\C{\mathbb C}
\def\RR{\mathbb R}
\begin{document}

\title[Laguerre expansions for $C_0$-semigroups and resolvent operators]
{$C_0$-semigroups  and resolvent operators approximated  by Laguerre expansions}

\author{Luciano Abadias}
%    Address of record for the research reported here
\address{Departamento de Matem\'aticas, Instituto Universitario de Matem\'aticas y Aplicaciones, Universidad de Zaragoza, 50009 Zaragoza, Spain.}
%    Current address
%\curraddr{}

\email{labadias@unizar.es}
%    \thanks will become a 1st page footnote.

 %   Information for first author
\author{Pedro J. Miana}
\address{Departamento de Matem\'aticas, Instituto Universitario de Matem\'aticas y Aplicaciones, Universidad de Zaragoza, 50009 Zaragoza, Spain.}
\email{pjmiana@unizar.es}
 %\thanks{The second author is  partially supported by ????}

\thanks{Authors have been partially supported by  Project MTM2013-42105-P, DGI-FEDER, of the MCYTS; Project E-64, D.G. Arag\'on; and  Project UZCUD2014-CIE-09, Universidad de Zaragoza.}

% General info
\subjclass[2010]{Primary, 33C45, 47D06; Secondary, 41A35, 47A60.}

\dedicatory{To Laura and Pablo}

\keywords{Laguerre expansions,  $C_0$-semigroups, resolvent operators, functional calculus.}

\begin{abstract} In this paper  we introduce  Laguerre expansions to approximate  vector-valued functions expanding on the well-known scalar theorem.
We apply this result to approximate
$C_0$-semi\-groups and resolvent operators in abstract Banach spaces.  We  study certain Laguerre functions, its Laplace transforms and the convergence of Laguerre series in Lebesgue spaces. The concluding section of this paper is devote to consider some examples of $C_0$-semigroups: shift, convolution and  holomorphic semigroups where some of these results are improved.

%Some functions defined in $\R^+$ can be expanded in series of Laguerre polynomials, $$f(t)=\displaystyle\sum_{n=0}^{\infty}c_n\,L^{(\alpha)}_n(t),$$ see \cite{Lebedev}. We extend this result for vector-valued functions in a Banach space, obtaining expansions for certain families of operators as $C_0$-semigroups and resolvent families. In particular, we show that
%$$
%e^{-tA}(x)=\sum_{n=0}^\infty {A^n\over (1+A)^{n+\alpha +1}}(x)L_n^{(\alpha)}(t)
%$$
%where $x\in D(A)$ and $ (e^{-tA})_{t\ge 0}$ is a $C_0$-uniformly bounded semigroup generated by $(A, D(A))$.
%This approach  completes  similar one shown in \cite{Grimm}.
\end{abstract}

\date{}

\maketitle

\section{Introduction}

Representations of functions through series of orthogonal polynomials such as Legendre, Hermite or Laguerre are well known in mathematical analysis and applied mathematics. They allow us to approximate functions by series of orthogonal polynomials on different types of convergence: a pointwise way, uniformly, or in Lebesgue norm. Two classical monographs where we can find this kind of results are \cite[Chapter 4]{Lebedev} and \cite[Chapter IX]{Szego}. In this paper we are concentrated on Laguerre expansions.

 Rodrigues' formula gives a representation of generalized Laguerre polynomials, $$L_n^{(\alpha)}(t)=e^{t}\frac{t^{-\alpha}}{n!}\frac{d^n}{dt^n}(e^{-t}t^{n+\alpha}), \qquad t\in \RR,$$ for $\alpha\in \R$ and $n\in\N\cup\{ 0 \}.$  Here we present an interesting theorem which appears in \mbox{\cite[ Sec. 4.23]{Lebedev},} and whose statement was originally proved by J.V. Uspensky in \cite{Uspensky}. As we prove in Theorem \ref{upesvect}, this result also holds for vector-valued functions in abstract Banach spaces.

\begin{theorem} \label{upes} Let $\alpha>-1$ and $f:(0,\infty)\to\C$ be a differentiable function such that the integral $\int_{0}^{\infty}e^{-t}t^{\alpha}|f(t)|^2\,dt$ is finite, then the series $\displaystyle\sum_{n=0}^{\infty}c_n(f)\,L^{(\alpha)}_n(t),$ with $$c_n(f):=\frac{n!}{\Gamma(n+\alpha+1)}\int_{0}^{\infty}e^{-t}t^{\alpha}f(t)L^{(\alpha)}_n(t)\,dt,$$ converges pointwise to $f(t)$ for $t>0$.
\end{theorem}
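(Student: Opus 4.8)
\smallskip

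\noindent\textbf{Proof proposal.}\hspace{2mm} The plan is to recast the statement as a pointwise convergence result for expansions in the $L^2$-normalised Laguerre functions and then to analyse the partial-sum kernel by the Christoffel--Darboux formula together with the classical asymptotics of Laguerre polynomials. For $n\ge0$ set
\[
\varphi_n^{(\alpha)}(t):=\Big(\tfrac{n!}{\Gamma(n+\alpha+1)}\Big)^{1/2}e^{-t/2}t^{\alpha/2}L_n^{(\alpha)}(t),
\]
so that $\{\varphi_n^{(\alpha)}\}_{n\ge0}$ is an orthonormal basis of $L^2(0,\infty)$. Putting $g(t):=e^{-t/2}t^{\alpha/2}f(t)$, the hypothesis says exactly that $g\in L^2(0,\infty)$, and a short computation gives, for $t>0$,
\[
\sum_{n=0}^{N}c_n(f)L_n^{(\alpha)}(t)=e^{t/2}t^{-\alpha/2}S_Ng(t),\qquad S_Ng:=\sum_{n=0}^{N}\langle g,\varphi_n^{(\alpha)}\rangle\,\varphi_n^{(\alpha)},
\]
so the theorem is equivalent to $S_Ng(t_0)\to g(t_0)$ for each fixed $t_0>0$. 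Since $f$ is differentiable at $t_0$ and the weight $e^{-t/2}t^{\alpha/2}$ is smooth near $t_0$, the function $g$ satisfies a Dini condition there, namely $|g(s)-g(t_0)|\le C\,|s-t_0|$ for $s$ close to $t_0$; this is the only way the differentiability hypothesis is used.

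Next I would write $S_Ng(t_0)=\int_0^\infty K_N(t_0,s)g(s)\,ds$ with $K_N(t,s)=\sum_{n=0}^{N}\varphi_n^{(\alpha)}(t)\varphi_n^{(\alpha)}(s)$ and apply the Christoffel--Darboux identity to obtain
\[
K_N(t_0,s)=\sqrt{(N{+}1)(N{+}\alpha{+}1)}\;\frac{\varphi_N^{(\alpha)}(t_0)\varphi_{N+1}^{(\alpha)}(s)-\varphi_{N+1}^{(\alpha)}(t_0)\varphi_N^{(\alpha)}(s)}{t_0-s}.
\]
Now insert the Hilb--Mehler--Heine asymptotics $\varphi_n^{(\alpha)}(t)=\pi^{-1/2}(nt)^{-1/4}\cos\!\big(2\sqrt{nt}-\tfrac{(2\alpha+1)\pi}{4}\big)+O(n^{-3/4})$, valid uniformly for $t$ in compact subsets of $(0,\infty)$. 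A product-to-sum manipulation, using $\sqrt{N{+}1}-\sqrt N=O(N^{-1/2})$, turns the numerator into a main term proportional to $N^{-1}(t_0s)^{-1/4}(\sqrt{t_0}+\sqrt s)\sin\!\big(2\sqrt N(\sqrt{t_0}-\sqrt s)\big)$, plus a term carrying the non-stationary phase $2\sqrt N(\sqrt{t_0}+\sqrt s)$, plus error terms. Dividing by $t_0-s=(\sqrt{t_0}-\sqrt s)(\sqrt{t_0}+\sqrt s)$ and multiplying by $\sqrt{(N{+}1)(N{+}\alpha{+}1)}\sim N$, the factor $N$ cancels and, to leading order,
\[
K_N(t_0,s)\approx\frac{1}{2\pi(t_0s)^{1/4}}\cdot\frac{\sin\!\big(2\sqrt N(\sqrt{t_0}-\sqrt s)\big)}{\sqrt{t_0}-\sqrt s}.
\]

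Substituting $u=\sqrt s$ and setting $G(u):=t_0^{-1/4}u^{1/2}g(u^2)$ (so $G(\sqrt{t_0})=g(t_0)$), the leading part of $S_Ng(t_0)$ becomes the classical Dirichlet integral $\int_0^\infty\frac{\sin(\lambda(\sqrt{t_0}-u))}{\pi(\sqrt{t_0}-u)}G(u)\,du$ with $\lambda=2\sqrt N\to\infty$. Split $(0,\infty)$ at $|u-\sqrt{t_0}|=\delta$. On the far part, $(\sqrt{t_0}-u)^{-1}G(u)$ is integrable — undoing the substitution this reduces, via Cauchy--Schwarz, to $g\in L^2(0,\infty)$ together with $s^{-1/4}\in L^2$ near $0$ and $s^{-3/4}\in L^2$ near $\infty$ — so the Riemann--Lebesgue lemma (Riemann's localisation principle) kills it. On the near part, $\int_{|u-\sqrt{t_0}|<\delta}\frac{\sin(\lambda(\sqrt{t_0}-u))}{\pi(\sqrt{t_0}-u)}\,du\to1$, while the Dini bound $|G(u)-G(\sqrt{t_0})|\le C'|u-\sqrt{t_0}|$ and the elementary estimate $\big|(\sqrt{t_0}-u)^{-1}\sin(\lambda(\sqrt{t_0}-u))\big|\cdot|u-\sqrt{t_0}|\le1$ make $\int_{|u-\sqrt{t_0}|<\delta}\frac{\sin(\lambda(\sqrt{t_0}-u))}{\pi(\sqrt{t_0}-u)}\big[G(u)-G(\sqrt{t_0})\big]\,du$ of size $O(\delta)$ uniformly in $N$. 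Letting $N\to\infty$ and then $\delta\to0$ yields $S_Ng(t_0)\to g(t_0)$, hence the claim — provided the non-stationary-phase term and the error kernels contribute $o(1)$.

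That proviso is where the real work lies, and I expect it to be the main obstacle. Controlling the error kernels requires the full uniform asymptotics and pointwise size bounds for $\varphi_n^{(\alpha)}$ in the three natural ranges — the oscillatory range $n^{-1}\lesssim s\lesssim n$, the turning-point (Airy) region $s\asymp 4n$, and the exponentially small tail $s\gtrsim 4n$ — together with a careful check that, paired with $g\in L^2$ and the singular factor $(t_0-s)^{-1}$, these errors are $o(1)$; the delicate point is the regime $s\to\infty$, where $g$ need not be integrable and one must exploit the exponential decay of $\varphi_n^{(\alpha)}$ for $s\gtrsim n$. Everything else is the reduction above plus the classical convergence theorem for the Dirichlet integral; alternatively, one may invoke the equiconvergence of Laguerre series with trigonometric series (see \cite{Szego}) and then apply Dini's test directly.
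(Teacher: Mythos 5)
Your overall strategy --- Christoffel--Darboux kernel, Hilb-type asymptotics of the Laguerre functions, Riemann--Lebesgue plus localization at $t_0$ --- is exactly Uspensky's, which is the proof the paper reproduces (in vector-valued form, in Theorem \ref{upesvect}). But there is a genuine gap, and you have located it yourself: the far region. The asymptotic formula $\varphi_n^{(\alpha)}(t)=\pi^{-1/2}(nt)^{-1/4}\cos\bigl(2\sqrt{nt}-\tfrac{(2\alpha+1)\pi}{4}\bigr)+O(n^{-3/4})$ is uniform only on compact subsets of $(0,\infty)$, so the Dirichlet-kernel approximation of $K_N(t_0,s)$ is simply not available on $\{|u-\sqrt{t_0}|\ge\delta\}$, which contains $s\to0^+$ and the whole range $s\gtrsim N$ (including the turning-point region $s\asymp 4N$). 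Your Riemann--Lebesgue step on the far part is therefore applied to a kernel you do not control there, and the ``proviso'' you defer is not a technical cleanup but the actual content of the tail estimate; as written, the proof does not close.

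The ingredient that closes it --- the one the paper uses, quoting Uspensky's formula (27), p.~614 --- is a bound uniform in $N$ of the form $\int_0^H y^\alpha e^{-y}K_N(t_0,y)^2\,dy\le C$ and $\int_G^\infty y^\alpha e^{-y}K_N(t_0,y)^2\,dy\le C$ for suitable $0<H<t_0<G$. With these, Cauchy--Schwarz against the hypothesis $\int_0^\infty e^{-y}y^\alpha|f(y)-f(t_0)|^2\,dy<\infty$ disposes of both tails outright, by choosing $H$ small and $G$ large so that the corresponding pieces of that integral are small; no asymptotics are needed outside $[H,G]$, where the Hilb expansion is legitimately uniform. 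Note also that on the compact middle part the paper's argument is cleaner than your Dirichlet-integral one: since $\int_0^\infty e^{-y}y^\alpha K_N(t_0,y)\,dy=1$, one works with $f(y)-f(t_0)$ from the start, and differentiability of $f$ at $t_0$ makes $F(t_0,y)=(f(y)-f(t_0))/(y-t_0)$ continuous on $[H,G]$; the Christoffel--Darboux factor $(y-t_0)^{-1}$ is absorbed into $F$, and the entire middle integral tends to zero by Riemann--Lebesgue applied to the oscillatory factor, with no near/far split around $t_0$ and no Dini estimate. The alternative you mention at the end --- equiconvergence of Laguerre and trigonometric expansions followed by Dini's test --- is legitimate, but that equiconvergence theorem is precisely the packaged form of the tail estimates you are missing.
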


There exists a large amount of  results about Laguerre expansions: for example, Laguerre expansions of analytic functions are considered in \cite{Rusev};   the decay of coefficients is also studied to show properties of the function defined by the Laguerre expansions in \cite{Weniger}; and the algebraic structure related to the Laguerre expansions is treated in detail in \cite{Kanjin}.

In particular,  Theorem \ref{upes} is applied to the function $e_a$ (where $e_a(t):=e^{-at})$  to obtain that \begin{equation}\label{expo2} e^{-a t}=\displaystyle\sum_{n=0}^{\infty}\frac{a^n}{(a +1)^{n+\alpha+1}}L_n^{(\alpha)}(t), \qquad a>0\end{equation} and the Laguerre expansion converges pointwise for $t>0$.

Through Laguerre polynomials, Laguerre functions are defined by $$\mathscr{L}_n^{(\alpha)}(t):=\sqrt{\frac{n!}{\Gamma(n+\alpha+1)}}t^{\frac{\alpha}{2}}e^{-\frac{t}{2}}L_n^{(\alpha)}(t),\quad t>0,$$ for $\alpha>-1$.  These functions form an orthonormal basis on the Hilbert space $L^2(\R_+).$ Furthermore, let $f$ be in $L^p(\R_+),$ $\frac{4}{3}<p<4,$ and $\displaystyle{a_k(f):=\int_0^{\infty}\mathscr{L}_k^{(\alpha)}(t)f(t)\,dt}$ for \mbox{$k\in \NN\cup\{0\}.$} Then $\lVert S_n(f)-f \rVert_p\to 0$ as $n\to\infty,$ with $$S_n(f)(t):=\displaystyle\sum_{k=0}^{n}a_k(f)\mathscr{L}_k^{(\alpha)}(t),\qquad t>0,$$ see \cite[Theorem 1]{Askey}.

On the other hand, a $C_0$-semigroup $(T(t))_{t\geq 0}$ is a one parameter family of linear and bounded operators on a Banach space $X$ which may be interpreted, roughly speaking, as $(e^{-tA})_{t\geq 0}$. The (densely defined) operator $-A,$ defined by $$-Ax:=\displaystyle\lim_{t\to 0^+}\frac{T(t)x-x}{t},\quad \hbox{when the limit exists} ,\, x\in D(A),$$ is called the infinitesimal generator of $C_0$-semigroup, see more details in the fourth section and in monographs \cite{ABHN, Nagel}. It seems natural to consider the formula (\ref{expo2}) in the context of $C_0$-semigroups; we prove that
 $$
 T(t)x=\displaystyle\sum_{n=0}^{\infty}A^n(A+1)^{-n-\alpha-1}x\,L^{(\alpha)}_n(t), \qquad x\in D(A),
$$
 in Theorem \ref{loc} (iii). The rate of convergence of $ \displaystyle\sum_{n=0}^{m}A^n(A+1)^{-n-\alpha-1}x\,L^{(\alpha)}_n(t)$ to $T(t)x$ when $m\to\infty$ is also estimated in  Theorem \ref{rate}.

In semigroup theory, there are different types approximations of $C_0$-semigroups as Trotter-Kato, Yosida, or Euler approximations. In a recent paper (\cite{Gomilko}), a new functional calculus is introduced  to improve some rate of these approximations. Also Pad\'{e} approximations and the rate of convergence to the semigroup have been treated in \cite{BT, Egert, Neubrander}.  Approximation of $C_0$-semigroups by resolvent series  has been considered in \cite{Grimm} using some analytic functional calculus. In this work, authors also work on the Laguerre polynomials $(L^{(-1)}_n)_{n\ge 0}$, however their approach is completely different to ours because they consider other different approximations,  compare \cite[Theorem 5.1]{Grimm} and Theorem \ref{loc} (iii).

The paper is organized as follows. In the second section, we consider the functions $t\mapsto\frac{n!}{\Gamma(n+\alpha+1)}e^{-t}t^{\alpha}L^{(\alpha)}_n(t)$ (for $\alpha>-1$ and $n\in \NN\cup\{0\}$) which have a key role in Theorem \ref{upes}. They satisfy interesting properties (similar to Laguerre polynomials, see Proposition \ref{alge} and \ref{propLag}). We estimate its $p$-norm in Theorem \ref{main2} and also  the $p$-norm of its Laplace transform in Theorem \ref{phi}.

In the third section, we consider certain Laguerre expansions in Lebesgue space $L^p(\R_+)$ and abstract Banach spaces. In particular we give the Laguerre expansion for the fractional semigroup and for the exponential function $e_a$  in $L^1(\R_+)$ (Theorem \ref{fract}).  To finish this section, we show a vector-valued version of Theorem \ref{upes} on an abstract Banach space $X$.

Main applications of Theorem \ref{upesvect} appear in the fourth and fifth section. In Theorem \ref{loc}, we express $C_0$-semigroup  and resolvent operators though Laguerre expansions. In Theorem \ref{appr}, we apply this Laguerre expansion  to express the resolvent semigroup subordinated  to the original $C_0$-semigroup by a series representation. In Theorem \ref{rate}, we give the rate of the Laguerre expansion to the $C_0$-semigroup, which depends on the regularity of the initial data.

In the last section, we present some examples of $C_0$-semigroups and its Laguerre expansion: shift semigroup, convolution semigroups and in particular, Gauss and Poisson semigroups in Lebesgue spaces. For some differentiable and analytic semigroups, some previous results  are improved.

%The  main aim of this paper is to approximate $C_0- vector-valued functions through series of orthogonal polynomials and apply these results to the study of the semigroup theory, which appears in the third section.

Throughout the paper, we work on functions and operators defined in $\R_+,$ but there are also other results on expansions for functions defined in $\R$ using Hermite polynomials. Hermite polynomials are defined by Rodrigues' formula $$H_n(t)=(-1)^n e^{t^2}\frac{d^n}{dt^n}(e^{-t^2})\ \text{for }t\in\R\ \text{and}\ n\geq 0.$$ A similar result to Theorem \ref{upes} holds for functions defined on $\R$ and involves Hermite polynomials,  see \cite[Sec. 4.15]{Lebedev}, in particular for $\lambda\in\C,$ $$e^{\lambda t}=e^{\frac{\lambda^2}{4}}\displaystyle\sum_{n=0}^{\infty}\frac{\lambda^n}{2^n n!}H_n(t), \qquad t\in \RR.$$ In the preprint \cite{AM}, we  work on vector-valued approximations defined by Hermite polynomials, and its applications to    $C_0$-groups and cosine operator families.

%If we consider the algebra homomorphism  $\Theta_+:L^1(\R^+)\to\mathcal{B}(X),$ with $$\Theta_+(f)x=\int_0^{\infty}f(t)T(t)x\,dt$$ that verifies $$-A\Theta_+(f)x=-\Theta_+(f')x-f(0)x$$ for $f\in C^{(1)}(\R^+)$ with $f'$ decays to zero at infinity and $x\in X,$ using integration by parts, see \cite[Theorem 3.1]{Miana2}.

%Summary of paper...
$\ $

\noindent {\bf Notation}. We write $\R_+:=[0,\infty).$ Given $1\leq p <\infty,$ let $L^p(\R_+)$ be the set of Lebesgue $p$-integrable functions, that is, $f$ is a measurable function and $$\Vert f \Vert_p:=\left(\int_0^{\infty}|f(t)|^p\,dt\right)^{\frac{1}{p}}<\infty;$$ for $p=2$, remind that $L^2(\R_+)$ is a Hilbert space with $\langle\,\,,\,\, \rangle$ the usual inner product, and $L^{\infty}(\R_+)$ the set of essential bounded Lebesgue functions with the norm $\lVert f\rVert_{\infty}:=\displaystyle\hbox{esssup}_{t\in\R_+}|f(t)|.$
We call  $C_0(\R_+)$ the set of continuous functions defined in $[0,\infty)$ such that $\displaystyle\lim_{t\to \infty}f(t)=0,$ with the norm $\lVert \ \rVert_{\infty};$ and $\mathcal{H}_0(\C_+)$ the Banach space given by the set of bounded holomorphic functions defined in $\C_+=\{z\in\C\,:\,{\Re}z>0\}$ such that $\displaystyle\lim_{z\to \infty}f(z)=0,$ with the norm $||| f|||_{\infty}:=\displaystyle\sup_{z\in\C_+}|f(z)|.$ Furthermore, it is important to consider the following equivalence, which appears several times throughout the paper: for $\alpha>-1$ there exist $c_1 <c_2$ such that
\begin{equation} \label{main}
{c_1\over n^\alpha}\le {n!\over \Gamma(n+\alpha +1)}\le {c_2\over n^\alpha}, \qquad n\in \N.
\end{equation}

\section{ Laguerre  functions and its Laplace transform}

\setcounter{theorem}{0}
\setcounter{equation}{0}
Generalized Laguerre polynomials $\{ L_n^{(\alpha)} \}_{n\ge 0}$ ($\alpha >-1$) are given  by
$$
L_n^{(\alpha)}(t)=\sum_{k=0}^n(-1)^{k}{n+\alpha\choose n-k}{t^k\over k!}, \qquad t\ge 0;
$$
in particular $L_0^{(\alpha)}(t)=1$, $L_1^{(\alpha)}(t)=-t+\alpha+1$  and
$\displaystyle{L_2^{(\alpha)}(t)={t^2\over 2}-(\alpha+2)t+{(\alpha+2)(\alpha+1)\over 2}
}$. They satisfy the following condition of orthogonality: $${n!\over \Gamma(n+\alpha +1)}\int_0^{\infty}e^{-t}t^{\alpha}L_n^{(\alpha)}(t)L_m^{(\alpha)}(t)dt=\delta_{n,m},$$ for $n,m=0,1,2,\ldots,$ with $\delta_{n,m}$ is the Kronecker delta. The generalized Laguerre polynomials are solutions of second order differential equation $$\label{Laguerresoleq}ty''+(\alpha+1-t)y'+ny=0.$$

Now, we provide a list of  well-known identities which  these polynomials satisfy,
\begin{eqnarray*}
nL_n^{(\alpha)}(t)&=&(n+\alpha)L_{n-1}^{(\alpha)}(t)-tL_{n-1}^{(\alpha+1)}(t),\cr
tL_{n}^{(\alpha+1)}(t)&=&(n+\alpha)L_{n-1}^{(\alpha)}(t)-(n-t)L_n^{(\alpha)}(t),\cr
nL_n^{(\alpha)}(t)&=&(\alpha+1-t)L_{n-1}^{(\alpha+1)}(t)-tL_{n-2}^{(\alpha+2)}(t),\cr
0&=&(n+1)L_{n+1}^{(\alpha)}(t)+(t-\alpha-2n-2)L_{n}^{(\alpha)}(t)+(n+\alpha)L_{n-1}^{(\alpha)}(t),
\end{eqnarray*}
 see for example \cite{Lebedev} and \cite{Szego}.

Muckenhoupt estimates for Laguerre functions $\mathscr{L}_n^{(\alpha)}$  are well known in the classical theory of orthogonal expansions, see \cite{Muckenhoupt} and \cite[Lemma 1.5.3]{Thangavelu}. As a consequence of these estimations,
\begin{eqnarray}\label{muck}
\Vert \mathscr{L}_n^{(0)} \Vert_1&\sim& \sqrt{n}, \qquad n\ge 1,\\
|L_n^{(\alpha)}(\lambda)|&\leq& C_{\lambda}n^{\frac{\alpha}{2}},\quad n\ge n_0,\label{mucklag}
\end{eqnarray}
hold for $\lambda>0,$ see \cite[Lemma 1]{Markett} and \cite[Lemma 1.5.4. (i)]{Thangavelu}. Similar results can be found in  \cite[Lemma 5.1]{Eisner} and \cite[Theorem 8.1]{Gomilko}.

In this paper  we are mainly interested in the following Laguerre functions.
\begin{definition}
For $\alpha\neq -1,-2,-3,\ldots,$  and $n\in \NN\cup\{0\}$, we denote by $\ell_n^{(\alpha)}$ the function defined in $(0,\infty)$ by
$$
\ell_n^{(\alpha)}(t):=\frac{n!}{\Gamma(n+\alpha+1)}t^{\alpha}e^{-t}L_n^{(\alpha)}(t).
$$
\end{definition}
Note that $\displaystyle\ell_n^{(\alpha)}(t)=\frac{1}{\Gamma(n+\alpha+1)}\frac{d^n}{dt^n}(e^{-t}t^{n+\alpha}),$ for $ t> 0,$ with $n=0,1,2,\ldots$ and  $\ell_{0}^{(\alpha)}=I_{\alpha+1} $, where $(I_s)_{s\in\C_+}$ is the fractional semigroup defined by $$I_s(t):=\frac{1}{\Gamma(s)}e^{-t}t^{s-1}, \qquad t>0,\, s\in\C_+.$$ In  \cite[Theorem 2.6]{Sinclair} the fractional semigroup $(I_s)_{z\in\C_+}$ is studied in detail; in particular $I_s\ast I_t=I_{t+s}$ for $s,\, t\in\C_+$ (i.e. algebraic semigroup)  where
 the convolution product of $f\ast g$ is defined by
$$
(f\ast g)(t):=\int_0^tf(t-s)g(s)ds, \qquad f,g\in L^1(\R_+),\quad t\ge 0.
$$
In the case of $\mu\in M(\R_+)$ is a non negative  Borel measures on $\R_+$ of total variation,  the convolution product $f\ast \mu$ is given by
$$
(f\ast \mu)(t):=\int_0^tf(t-s)d\mu(t), \qquad f \in L^1(\R_+),\quad t\ge 0.
$$
The Dirac delta $\delta_0$ verifies that $f\ast \delta_0 =f$ for any $f\in L^1(\R_+)$. We write by $f^{\ast n}=f\ast f^{\ast(n-1)}$ for $n\ge 2$ and $f^{\ast 1}=f$. Laguerre functions $\{ \ell_n^{(\alpha)}\}_{n\ge 0}$ satisfy the following algebraic property (for convolution product $\ast$).

\begin{proposition}\label{alge} For $\alpha>-1$, $n \in \NN$, and $e_1(t):=e^{-t}$ for $t>0$, we have that
\begin{equation}
\label{deco}
 \ell_n^{(\alpha)}= (\delta_0-e_1)^{\ast n}\ast  \ell_0^{(\alpha)}.
\end{equation}
Then the equality $\ell_{n}^{(\alpha)}*\ell_{m}^{(\beta)}=\ell_{n+m}^{(\alpha+\beta+1)}\ $ holds for all $\ \alpha ,\beta>-1 $  and $n,m\ge 0$.

\end{proposition}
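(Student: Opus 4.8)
The plan is to obtain the convolution factorisation (\ref{deco}) by comparing Laplace transforms, and then to read off the product formula $\ell_n^{(\alpha)}\ast\ell_m^{(\beta)}=\ell_{n+m}^{(\alpha+\beta+1)}$ as a short formal consequence in the convolution algebra on $\R_+$.

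The first step is to compute $(\mathcal L\ell_n^{(\alpha)})(z)$, where $\mathcal L$ denotes the Laplace transform. Starting from $\ell_n^{(\alpha)}=\frac{d^n}{dt^n}I_{n+\alpha+1}$ (which is the Rodrigues-type expression already recorded above, since $I_{n+\alpha+1}(t)=\frac{1}{\Gamma(n+\alpha+1)}e^{-t}t^{n+\alpha}$), I would observe that $I_{n+\alpha+1}$ and all of its derivatives of order $\le n-1$ vanish at $t=0$: by the Leibniz rule the monomial of least degree in $\frac{d^k}{dt^k}(e^{-t}t^{n+\alpha})$ is a multiple of $t^{\,n+\alpha-k}$, whose exponent is $\ge\alpha+1>0$ when $k\le n-1$. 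This is precisely where the hypothesis $\alpha>-1$ enters. Combining this with the elementary Gamma-integral identity $(\mathcal L I_s)(z)=(1+z)^{-s}$ for $\Re z>-1$ and the differentiation rule for the Laplace transform gives
$$(\mathcal L\ell_n^{(\alpha)})(z)=z^n\,(\mathcal L I_{n+\alpha+1})(z)=\frac{z^n}{(1+z)^{n+\alpha+1}},\qquad \Re z>-1.$$

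On the other side, $(\delta_0-e_1)^{\ast n}\ast\ell_0^{(\alpha)}$ is an element of $L^1(\R_+)$: writing $(\delta_0-e_1)^{\ast n}=\delta_0+g$ with $g\in L^1(\R_+)$ one gets $(\delta_0-e_1)^{\ast n}\ast\ell_0^{(\alpha)}=\ell_0^{(\alpha)}+g\ast\ell_0^{(\alpha)}\in L^1(\R_+)$. Since $(\mathcal L(\delta_0-e_1))(z)=1-(1+z)^{-1}=z/(1+z)$ and $(\mathcal L\ell_0^{(\alpha)})(z)=(\mathcal L I_{\alpha+1})(z)=(1+z)^{-\alpha-1}$, the transform of this function equals $\bigl(z/(1+z)\bigr)^n(1+z)^{-\alpha-1}=z^n(1+z)^{-n-\alpha-1}$, which is exactly $(\mathcal L\ell_n^{(\alpha)})(z)$. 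As $\ell_n^{(\alpha)}$ is itself in $L^1(\R_+)$, injectivity of the Laplace transform on $L^1(\R_+)$ yields (\ref{deco}). One could instead argue by induction on $n$ from $\ell_{n+1}^{(\alpha)}=(\delta_0-e_1)\ast\ell_n^{(\alpha)}$, which reduces to the two identities $\ell_{n+1}^{(\alpha)}=\ell_n^{(\alpha)}-\ell_n^{(\alpha+1)}$ and $\ell_n^{(\alpha+1)}=e_1\ast\ell_n^{(\alpha)}$; but these are again quickest to check on the transform side, so I would keep the direct computation. (Note also that (\ref{deco}) holds trivially for $n=0$, as $(\delta_0-e_1)^{\ast 0}=\delta_0$.)

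For the product formula I would work in the Banach algebra $(M(\R_+),\ast)$, using commutativity and associativity of $\ast$ together with (\ref{deco}) applied to both factors: for $\alpha,\beta>-1$ and $n,m\ge 0$,
$$\ell_n^{(\alpha)}\ast\ell_m^{(\beta)}=(\delta_0-e_1)^{\ast(n+m)}\ast\ell_0^{(\alpha)}\ast\ell_0^{(\beta)}=(\delta_0-e_1)^{\ast(n+m)}\ast I_{\alpha+\beta+2},$$
the last equality being $\ell_0^{(\alpha)}\ast\ell_0^{(\beta)}=I_{\alpha+1}\ast I_{\beta+1}=I_{\alpha+\beta+2}$, i.e. the fractional semigroup property $I_s\ast I_t=I_{s+t}$ quoted earlier. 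Since $I_{\alpha+\beta+2}=\ell_0^{(\alpha+\beta+1)}$ and $\alpha+\beta+1>-1$, one more application of (\ref{deco}) rewrites the right-hand side as $\ell_{n+m}^{(\alpha+\beta+1)}$, as claimed. The only genuinely delicate point in the whole argument is the transform computation for $\ell_n^{(\alpha)}$, where one must verify that no boundary terms survive in $\mathcal L\bigl((I_{n+\alpha+1})^{(n)}\bigr)$ — the single step that uses $\alpha>-1$; everything else (membership in $L^1(\R_+)$, injectivity of $\mathcal L$, and the manipulations with $\ast$) is routine bookkeeping.
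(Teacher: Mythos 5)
Your proof is correct, but it reaches the decomposition (\ref{deco}) by a different route than the paper. The paper's argument is a direct computation in the convolution algebra: it expands $(\delta_0-e_1)^{\ast n}=\sum_{k=0}^n(-1)^k\binom{n}{k}I_k$ (with $I_0:=\delta_0$ and $e_1^{\ast k}=I_k$), applies the semigroup property $I_k\ast I_{\alpha+1}=I_{k+\alpha+1}$ termwise, and then recognizes the resulting finite sum $\sum_{k=0}^n(-1)^k\frac{\Gamma(n+\alpha+1)}{(n-k)!\,\Gamma(k+\alpha+1)}\frac{t^k}{k!}$ as exactly the explicit series for $L_n^{(\alpha)}(t)$ given at the start of Section 2 --- no transform, no injectivity argument, no integrability checks. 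You instead compute $\mathcal{L}(\ell_n^{(\alpha)})$ via the Rodrigues-type formula and integration by parts (correctly isolating $\alpha>-1$ as the hypothesis that kills the boundary terms at $t=0$), match it against $\mathcal{L}\bigl((\delta_0-e_1)^{\ast n}\ast\ell_0^{(\alpha)}\bigr)$, and invoke injectivity of $\mathcal{L}$ on $L^1(\R_+)$. Your route has the side benefit of deriving the transform formula $\mathcal{L}(\ell_n^{(\alpha)})(z)=z^n(1+z)^{-n-\alpha-1}$ from scratch, which the paper only cites from tables (formula (\ref{LaplaceLagFunc})), and it avoids having to recognize the coefficient identity $\binom{n+\alpha}{n-k}=\frac{\Gamma(n+\alpha+1)}{(n-k)!\,\Gamma(k+\alpha+1)}$; the paper's computation is shorter and stays entirely at the level of elementary algebra once the series for $L_n^{(\alpha)}$ is in hand. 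For the product formula $\ell_n^{(\alpha)}\ast\ell_m^{(\beta)}=\ell_{n+m}^{(\alpha+\beta+1)}$ your argument (commutativity and associativity of $\ast$, (\ref{deco}) applied to both factors, and $I_{\alpha+1}\ast I_{\beta+1}=I_{\alpha+\beta+2}=\ell_0^{(\alpha+\beta+1)}$) is precisely the ``straightforward consequence'' the paper intends.
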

\begin{proof} Note that $e_1^{\ast n}=I_n$, we write $I_0:=\delta_0$  and then
\begin{eqnarray*}
(\delta_0-e_1)^{\ast n}\ast\ell_n^{(\alpha)}&=&\sum_{k=0}^n(-1)^k{ n!\over k!(n-k)!}I_k\ast I_{\alpha+1}=\sum_{k=0}^n(-1)^k{ n!\over k!(n-k)!}I_{k+\alpha+1}\\
&=&{n!\over \Gamma(n+\alpha+1)}t^\alpha e^{-t} \sum_{k=0}^n(-1)^k{\Gamma(n+\alpha+1)\over (n-k)!\Gamma(k+\alpha +1)}{t^{k}\over k!}= \ell_n^{(\alpha)},
\end{eqnarray*}
for $\alpha>-1$, $n \in \NN$. The algebraic equality $\ell_{n}^{(\alpha)}*\ell_{m}^{(\beta)}=\ell_{n+m}^{(\alpha+\beta+1)}\ $ for  $\ \alpha ,\beta>-1 $  and $n,m\ge 0$
 is a straightforward consequence of the equality (\ref{deco}) and $\ell_{0}^{(\alpha)}=I_{\alpha+1} $.
\end{proof}

Functions $\{\ell_n^{(\alpha)}\}_{n\ge 0}$ satisfy  recurrence relations, differential equations and some additional identities as the next proposition shows. The proof is left to the reader.

\begin{proposition}\label{propLag} For $\alpha\neq -1,-2,-3,\ldots,$ the family of functions $\{ \ell_n^{(\alpha)} \}_{n\ge 0}$  satisfies:
\begin{itemize}
\item[(i)] $$\ell_n^{(\alpha)}(t)=\ell_{n-1}^{(\alpha)}(t)-\ell_{n-1}^{(\alpha+1)}(t).$$
\item[(ii)] $$(n+\alpha+1)\ell_{n}^{(\alpha+1)}(t)=n \ell_{n-1}^{(\alpha)}(t)-(n-t )\ell_{n}^{(\alpha)}(t).$$
\item[(iii)] $$t \ell_{n}^{(\alpha)}(t)=(\alpha+1-t)\ell_{n-1}^{(\alpha+1)}(t)-(n-1)\ell_{n-2}^{(\alpha+2)}(t).$$
\item[(iv)] $$(n+\alpha+1)\ell_{n+1}^{(\alpha)}(t)+(t-\alpha-2n-1)\ell_{n}^{(\alpha)}(t)+n\ell_{n-1}^{(\alpha)}(t)=0.$$
\item[(v)] $$t\frac{d^2}{dt^2} \ell_{n}^{(\alpha)}(t)+(1-\alpha+t)\frac{d}{dt}\ell_{n}^{(\alpha)}(t)+(n+1)\ell_{n}^{(\alpha)}(t)=0.$$
\item[(vi)] For $k\geq 1$, $\displaystyle{\frac{d^k}{dt^k}\ell_{n}^{(\alpha)}(t)=\ell_{n+k}^{(\alpha-k)}(t).}$

\end{itemize}
\end{proposition}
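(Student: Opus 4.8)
The plan is to reduce each of the six statements to the classical identity for the Laguerre polynomials $L_n^{(\alpha)}$ displayed above, combined with purely arithmetic manipulations of the normalizing constant $k_n^{(\alpha)}:=\frac{n!}{\Gamma(n+\alpha+1)}$ and of the weight $w_\alpha(t):=t^{\alpha}e^{-t}$, so that $\ell_n^{(\alpha)}=k_n^{(\alpha)}\,w_\alpha\,L_n^{(\alpha)}$. First I would record the elementary consequences of $\Gamma(z+1)=z\,\Gamma(z)$,
\[
w_{\alpha+1}(t)=t\,w_\alpha(t),\qquad k_n^{(\alpha)}=n\,k_{n-1}^{(\alpha+1)}=\frac{n}{\,n+\alpha\,}\,k_{n-1}^{(\alpha)},\qquad k_n^{(\alpha+1)}=\frac{1}{\,n+\alpha+1\,}\,k_n^{(\alpha)},
\]
together with the Rodrigues-type formula $\ell_n^{(\alpha)}(t)=\frac{1}{\Gamma(n+\alpha+1)}\frac{d^n}{dt^n}(e^{-t}t^{n+\alpha})$ already recorded in the excerpt.

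For (i)--(iv) the scheme is uniform: take the relevant displayed polynomial identity, multiply it by $w_\alpha$ and by the constant that converts its leading term into the intended $\ell$, and rewrite every summand $t^{j}L_m^{(\beta)}(t)$ as a multiple of $\ell_m^{(\beta)}(t)$ by means of the relations above; the coefficients then match. For example, using $w_{\alpha+1}=t\,w_\alpha$ and $k_{n-1}^{(\alpha+1)}=\frac{1}{n+\alpha}k_{n-1}^{(\alpha)}$, the right-hand side of (i) equals
\[
w_\alpha(t)\Bigl(k_{n-1}^{(\alpha)}L_{n-1}^{(\alpha)}(t)-k_{n-1}^{(\alpha+1)}\,t\,L_{n-1}^{(\alpha+1)}(t)\Bigr)=\frac{k_{n-1}^{(\alpha)}}{\,n+\alpha\,}\,w_\alpha(t)\Bigl((n+\alpha)L_{n-1}^{(\alpha)}(t)-t\,L_{n-1}^{(\alpha+1)}(t)\Bigr),
\]
which by the first displayed identity is $\frac{n}{n+\alpha}k_{n-1}^{(\alpha)}w_\alpha(t)L_n^{(\alpha)}(t)=k_n^{(\alpha)}w_\alpha(t)L_n^{(\alpha)}(t)=\ell_n^{(\alpha)}(t)$. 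Statements (ii), (iii) and (iv) come out in exactly the same way from the second identity, the third identity and the three-term recurrence, respectively, the only work being to keep track of which Gamma-quotient multiplies which term.

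For (vi) I would differentiate the Rodrigues-type formula: since $\Gamma(n+\alpha+1)=\Gamma((n+1)+(\alpha-1)+1)$ and $n+1$ is exactly the order of the resulting derivative, one gets $\frac{d}{dt}\ell_n^{(\alpha)}=\ell_{n+1}^{(\alpha-1)}$, and an induction on $k$ yields $\frac{d^k}{dt^k}\ell_n^{(\alpha)}=\ell_{n+k}^{(\alpha-k)}$. Finally (v) requires no new computation: by (vi), $\frac{d}{dt}\ell_n^{(\alpha)}=\ell_{n+1}^{(\alpha-1)}$ and $\frac{d^2}{dt^2}\ell_n^{(\alpha)}=\ell_{n+2}^{(\alpha-2)}$, and substituting the pair $(n+2,\alpha-2)$ into part (iii) gives
\[
t\,\ell_{n+2}^{(\alpha-2)}(t)=(\alpha-1-t)\,\ell_{n+1}^{(\alpha-1)}(t)-(n+1)\,\ell_n^{(\alpha)}(t),
\]
which upon rearrangement is precisely the differential equation in (v).

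There is no genuine obstacle here: the proposition is essentially a bookkeeping exercise, and the step most prone to slips is the constant-chasing in (i)--(iv), i.e.\ keeping the powers of $t$ and the ratios $\Gamma(n+\alpha)/\Gamma(n+\alpha+1)$ straight when passing from $L_n^{(\alpha)}$ to $\ell_n^{(\alpha)}$. For (vi) one should also read the identity with the index $\alpha-k$ kept outside $\{-1,-2,-3,\dots\}$, so that the right-hand side is a bona fide member of the family $\{\ell_m^{(\beta)}\}$.
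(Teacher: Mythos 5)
Your plan is sound, and since the paper declares that ``the proof is left to the reader,'' there is no authorial argument to compare it with: reducing each item to the corresponding classical identity for $L_n^{(\alpha)}$ by tracking the weight $w_\alpha(t)=t^\alpha e^{-t}$ and the constant $k_n^{(\alpha)}=n!/\Gamma(n+\alpha+1)$ is exactly the intended bookkeeping. Your worked case (i) is correct, (ii) and (iii) go through the same way, the one-line proof of (vi) from the Rodrigues-type formula is correct, and deriving (v) by substituting $(n+2,\alpha-2)$ into (iii) and invoking (vi) is a clean shortcut that checks out.

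One concrete warning about (iv), which you dispatch as ``exactly the same way from the three-term recurrence'': the recurrence as displayed in the paper, $0=(n+1)L_{n+1}^{(\alpha)}(t)+(t-\alpha-2n-2)L_{n}^{(\alpha)}(t)+(n+\alpha)L_{n-1}^{(\alpha)}(t)$, contains a typo. Testing it at $n=1$ with $L_1^{(\alpha)}(t)=-t+\alpha+1$ and the given $L_2^{(\alpha)}$ leaves a residue of $-L_1^{(\alpha)}(t)$ rather than $0$; the correct middle coefficient is $t-\alpha-2n-1$. If you feed the printed version into your scheme you land on $(t-\alpha-2n-2)\ell_n^{(\alpha)}$ in (iv), which contradicts the statement. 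Starting instead from the standard recurrence $(n+1)L_{n+1}^{(\alpha)}=(2n+1+\alpha-t)L_n^{(\alpha)}-(n+\alpha)L_{n-1}^{(\alpha)}$ and multiplying by $k_n^{(\alpha)}w_\alpha$, the ratios $k_n^{(\alpha)}/k_{n+1}^{(\alpha)}=(n+\alpha+1)/(n+1)$ and $k_n^{(\alpha)}/k_{n-1}^{(\alpha)}=n/(n+\alpha)$ give (iv) exactly as stated. Your closing caveat about keeping $\alpha-k$ outside $\{-1,-2,\dots\}$ is reasonable but in fact harmless here: the Gamma factor occurring in $\ell_{n+k}^{(\alpha-k)}$ is again $\Gamma(n+\alpha+1)$, which is unaffected by the simultaneous shift of the two indices.
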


In the next theorem, we present some estimates of $p$-Lebesgue norm of Laguerre functions $\{\ell^{(\alpha)}_n\}_{n\ge 0}$.

\begin{theorem} \label{main2} Take $p\ge 1$, $\alpha >-1 $ and the set of functions $\{\ell^{(\alpha)}_n\}_{n\ge 0}$. We denote by $\mathcal{Z}(L_n^{(\alpha)})$ the set of zeros of generalized Laguerre polynomial of degree $n$, $L_n^{(\alpha)}.$
\begin{enumerate}

\item[(i)] For $\alpha>0$, and $n\ge 0$, the inequality $\lVert \ell_{n}^{(\alpha)} \rVert_{\infty}\leq\lVert \ell_{n+1}^{(\alpha-1)} \rVert_1$ holds.

\item[(ii)] The set of functions $ \{\ell^{(\alpha)}_n\}_{n\ge 0} \subset L^1(\R_+)$ for $\alpha >{-1}$, and
$$
 \displaystyle\max_{t\in\mathcal{Z}(L_n^{(\alpha)})}|\ell_{n-1}^{(\alpha)}(t)|\le\Vert \ell^{(\alpha)}_n\Vert_{1}\le {C_{\alpha}\over n^{\alpha \over 2}}, \qquad n\ge 1.
$$

 \item[(iii)] The set of function $ \{\ell^{(\alpha)}_n\}_{n\ge 0} \subset L^p(\R_+)$ for $\alpha >-{1\over p}$ and
$
\displaystyle\lVert \ell^{(\alpha)}_n\rVert_{p}\leq  C_{p,\alpha}\ n^{\frac{1}{2}} ,$ for  $ n\ge 1.
$

\end{enumerate}
\end{theorem}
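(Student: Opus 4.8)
The plan is to treat the three items in turn, all of them resting on Proposition \ref{propLag}(i) and (vi), on the factorization $\ell_n^{(\alpha)}(t)=\bigl(\tfrac{n!}{\Gamma(n+\alpha+1)}\bigr)^{1/2}t^{\alpha/2}e^{-t/2}\mathscr L_n^{(\alpha)}(t)$ combined with $\Vert\mathscr L_n^{(\alpha)}\Vert_2=1$, and on the equivalence (\ref{main}); for the negative range in (iii) I will also invoke the classical Muckenhoupt pointwise estimates for $\mathscr L_n^{(\alpha)}$ (cf.\ \cite{Muckenhoupt}, \cite[Lemma 1.5.3]{Thangavelu}).

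For (i) I would note that Proposition \ref{propLag}(vi) with $k=1$ gives $(\ell_n^{(\alpha)})'=\ell_{n+1}^{(\alpha-1)}$; since $\alpha>0$ the factor $t^{\alpha}$ makes $\ell_n^{(\alpha)}(t)\to0$ as $t\to0^+$ and $e^{-t}$ makes it vanish at $\infty$, so $\ell_n^{(\alpha)}(t)=\int_0^t\ell_{n+1}^{(\alpha-1)}(s)\,ds$ and hence $|\ell_n^{(\alpha)}(t)|\le\int_0^\infty|\ell_{n+1}^{(\alpha-1)}|=\Vert\ell_{n+1}^{(\alpha-1)}\Vert_1$; taking the supremum over $t>0$ finishes. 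For (ii), membership in $L^1(\R_+)$ for $\alpha>-1$ is immediate ($\ell_n^{(\alpha)}(t)$ behaves like a multiple of $t^{\alpha}$ near $0$ and decays exponentially), and for the upper bound I would apply Cauchy--Schwarz to the factorization:
$$\Vert\ell_n^{(\alpha)}\Vert_1=\Bigl(\tfrac{n!}{\Gamma(n+\alpha+1)}\Bigr)^{1/2}\int_0^\infty t^{\alpha/2}e^{-t/2}|\mathscr L_n^{(\alpha)}(t)|\,dt\le\Bigl(\tfrac{n!}{\Gamma(n+\alpha+1)}\Bigr)^{1/2}\Gamma(\alpha+1)^{1/2},$$
so (\ref{main}) gives $\Vert\ell_n^{(\alpha)}\Vert_1\le C_\alpha n^{-\alpha/2}$. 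For the lower bound I would use $\ell_n^{(\alpha)}=(\ell_{n-1}^{(\alpha+1)})'$ (Proposition \ref{propLag}(vi)), so that $\Vert\ell_n^{(\alpha)}\Vert_1$ is the total variation of $\ell_{n-1}^{(\alpha+1)}$ on $(0,\infty)$; this function vanishes at $0$ (as $\alpha+1>0$) and at $\infty$, and its critical points in $(0,\infty)$ are precisely the simple zeros $\mathcal Z(L_n^{(\alpha)})=\{t_1<\cdots<t_n\}$ of $L_n^{(\alpha)}$, between which it is monotone, so its total variation is at least $2|\ell_{n-1}^{(\alpha+1)}(t_j)|$ for every $j$. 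Finally, Proposition \ref{propLag}(i) at $t_j$ reads $0=\ell_n^{(\alpha)}(t_j)=\ell_{n-1}^{(\alpha)}(t_j)-\ell_{n-1}^{(\alpha+1)}(t_j)$, whence $\ell_{n-1}^{(\alpha+1)}(t_j)=\ell_{n-1}^{(\alpha)}(t_j)$ and the lower bound follows.

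For (iii), membership for $\alpha>-1/p$ again follows from $|\ell_n^{(\alpha)}(t)|\sim t^{\alpha}/\Gamma(\alpha+1)$ near $0$ ($p$-integrable iff $p\alpha>-1$), and I would split the norm estimate into two cases. If $\alpha\ge0$ I would interpolate, $\Vert\ell_n^{(\alpha)}\Vert_p\le\Vert\ell_n^{(\alpha)}\Vert_\infty^{1-1/p}\Vert\ell_n^{(\alpha)}\Vert_1^{1/p}$, controlling $\Vert\ell_n^{(\alpha)}\Vert_1\le Cn^{-\alpha/2}$ by (ii) and $\Vert\ell_n^{(\alpha)}\Vert_\infty\le\Vert\ell_{n+1}^{(\alpha-1)}\Vert_1\le Cn^{(1-\alpha)/2}$ by (i) and (ii) when $\alpha>0$ (and $\Vert\ell_n^{(0)}\Vert_\infty\le1$ directly from $|L_n^{(0)}(t)|\le e^{t/2}$), which yields $\Vert\ell_n^{(\alpha)}\Vert_p\le Cn^{(1-1/p-\alpha)/2}\le Cn^{1/2}$. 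If $-1/p<\alpha<0$ then $\ell_n^{(\alpha)}$ is unbounded near $0$, so I would split $\int_0^\infty=\int_0^1+\int_1^\infty$: on $(1,\infty)$, $t^{\alpha/2}\le1$ gives $|\ell_n^{(\alpha)}(t)|\le(n!/\Gamma(n+\alpha+1))^{1/2}e^{-t/2}|\mathscr L_n^{(\alpha)}(t)|$, and Hölder in $t$ together with $\Vert\mathscr L_n^{(\alpha)}\Vert_2=1$ (and, for $p>2$, the uniform boundedness of $\mathscr L_n^{(\alpha)}$ on $[1,\infty)$ from the Muckenhoupt estimates) and (\ref{main}) gives $\Vert\ell_n^{(\alpha)}\Vert_{L^p(1,\infty)}\le Cn^{-\alpha/2}\le Cn^{1/2}$; on $(0,1)$ I would subdivide at $t\asymp1/n$ and insert the Muckenhoupt bounds $|\mathscr L_n^{(\alpha)}(t)|\le C(nt)^{\alpha/2}$ on $(0,c/n]$ and $|\mathscr L_n^{(\alpha)}(t)|\le C(nt)^{-1/4}$ on $[c/n,1]$, reducing the two pieces (after using (\ref{main})) to $\int_0^{c/n}t^{p\alpha}\,dt$ and $\int_{c/n}^1 t^{(\alpha/2-1/4)p}\,dt$ with an $n$-power in front, and a short case analysis (in $p$ relative to $2$ and $4$) bounds $\Vert\ell_n^{(\alpha)}\Vert_{L^p(0,1)}$ by $Cn^{1/2}$. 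Adding the two contributions completes (iii).

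The hard part will be the range $-1/p<\alpha<0$ in (iii): the origin singularity of $\ell_n^{(\alpha)}$ forces one off the clean interpolation track and onto the sharp pointwise Muckenhoupt asymptotics of $\mathscr L_n^{(\alpha)}$ together with a region-by-region estimate, the only delicate part being the exponent bookkeeping near $0$ — which the deliberately generous target power $n^{1/2}$ absorbs uniformly. A minor but genuine point in (ii) is that the total-variation identity presupposes $\ell_n^{(\alpha)}\in L^1(\R_+)$ (equivalently, that $\ell_{n-1}^{(\alpha+1)}$ is absolutely continuous), so that membership claim should be established first.
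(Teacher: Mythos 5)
Your proof is correct and follows the same skeleton as the paper's: part (i) is the fundamental theorem of calculus combined with Proposition \ref{propLag} (vi) (the paper integrates from $t$ to $\infty$ using the decay of $\ell_n^{(\alpha)}$ there, you integrate from $0$ to $t$ using the vanishing at the origin for $\alpha>0$; both are valid), and the lower bound in (ii) is the same critical-point argument, locating the extrema of $\ell_{n-1}^{(\alpha+1)}$ at the zeros $t_j$ of $L_n^{(\alpha)}$ and converting $\ell_{n-1}^{(\alpha+1)}(t_j)$ into $\ell_{n-1}^{(\alpha)}(t_j)$ via Proposition \ref{propLag} (i), your total-variation phrasing being equivalent to the paper's use of part (i). The only genuine difference is that you supply arguments for the two steps the paper disposes of by citation: your Cauchy--Schwarz computation $\Vert\ell_n^{(\alpha)}\Vert_1\le\bigl(n!/\Gamma(n+\alpha+1)\bigr)^{1/2}\Gamma(\alpha+1)^{1/2}$, followed by (\ref{main}), is precisely the content of \cite[(5.7.16)]{Szego} which the paper quotes for the upper bound in (ii), and your region-by-region derivation of (iii) from the pointwise Muckenhoupt estimates for $\mathscr{L}_n^{(\alpha)}$ (interpolation between the $L^1$ and $L^\infty$ bounds for $\alpha\ge 0$, and the splitting at $t\asymp 1/n$ and $t=1$ for $-1/p<\alpha<0$) fills in what the paper states as ``a direct consequence of Muckenhoupt estimates.'' This buys self-containedness at the cost of length; I checked your exponent bookkeeping in the range $-1/p<\alpha<0$ and each regional contribution indeed lands within the (generous) target $C_{p,\alpha}\,n^{1/2}$, as does the interpolation bound $n^{(1-1/p)/2-\alpha/2}$ in the case $\alpha\ge 0$.
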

\begin{proof} To show (i), it is sufficient to use that $$|\ell_{n}^{(\alpha)}(t)|\leq \int_t^{\infty}|\frac{d}{ds}\ell_{n}^{(\alpha)}(s)|\,ds\leq\lVert \ell_{n+1}^{(\alpha-1)}\rVert_1,\qquad t>0,$$
where we have applied  Proposition \ref{propLag} (vi). The second inequality in part (ii) is  in \cite[(5.7.16)]{Szego}.  By the part (i), and due to $\displaystyle\lim_{t\to 0+}\ell_{n-1}^{(\alpha+1)}(t)=0=\displaystyle\lim_{t\to \infty}\ell_{n-1}^{(\alpha+1)}(t)$, we obtain that
 \begin{eqnarray*}\lVert \ell_n^{(\alpha)}\rVert_1&\geq& \lVert \ell_{n-1}^{(\alpha+1)} \rVert_{\infty} =\displaystyle\max_{\{ t\in\R_+\ |\ (\ell_{n-1}^{(\alpha+1)})'(t)=0 \}}\vert \ell_{n-1}^{(\alpha+1)}(t)\vert \\
&=&\displaystyle\max_{\{ t\in\R_+\ |\ \ell_{n}^{(\alpha)}(t)=0 \}}\vert\ell_{n-1}^{(\alpha+1)}(t)\vert =\displaystyle\max_{\{ t\in\R_+\ |\ L_{n}^{(\alpha)}(t)=0 \}}\vert \ell_{n-1}^{(\alpha)}(t)\vert,
\end{eqnarray*}
where we have used Proposition \ref{propLag} (vi) and (i) to get the result. The part (iii) is a direct consequence of   Muckenhoupt estimates (\cite{Muckenhoupt} and \cite[Lemma 1.5.3]{Thangavelu}), see also \cite[Lemma 1]{Markett}.
\end{proof}

\begin{remark} {\rm To obtain other estimates for $\lVert \ell_n^{(\alpha)}\rVert_p,$ there exists the possibility of using different bounds of Laguerre polynomials. By \cite[Corollary 2.2]{Du}, we conclude that  there is a  constant $C_{\alpha}>0$ such that $$\lVert \ell_n^{(\alpha)}\rVert_p\leq C_{\alpha}\, n^{2+[\alpha]-\alpha},\qquad \alpha>-1,\quad p\ge 1,$$
 where $[\alpha]$ is the integer part of the real number $\alpha.$ For $\alpha=0$, the bound $ \vert L_n^{(0)}(t)\vert \le e^{t\over 2}$ for $t\ge 0$ and $n\in \NN\cup\{0\}$ appears in \mbox{\cite[(7.21.3)]{Szego},} and then
  $$
  \Vert \ell^{(0)}_n\Vert_p\le\left({p\over 2}\right)^{1\over p}, \qquad p>1.
$$ In \cite[Theorems 7.6.2 and 7.6.4]{Szego}, \cite[Theorem 3]{Duran2} and \cite[Theorem 2]{Love}, other pointwise bounds for Laguerre polynomials are obtained. From those, bounds of $\Vert \ell^{(\alpha)}_n\Vert_p$ might be deduced.
}

\end{remark}

We denote with $\mathcal{L}$ the usual Laplace transform, $\mathcal{L}: L^1(\R_+)\to \mathcal{H}_0(\C_+)$ defined by
 $$
\mathcal{L}(f)(z):=\int_0^\infty e^{-zt}f(t)dt, \qquad f\in L^1(\R_+),\quad z\in\C_+.
$$
Remind that the Laplace transform is a bounded linear operator, $||| \mathcal{L}(f)|||_\infty \le \Vert f\Vert_1$ such that $\mathcal{L}(f\ast g)=\mathcal{L}(f)\mathcal{L}(g)$ for  $f,g\in L^1(\R_+)$. Observe that \begin{equation}\label{LaplaceLagFunc}
\mathcal{L}(\ell_{n}^{(\alpha)})(z)=\frac{z^n}{(z+1)^{n+\alpha+1}}, \qquad z\in\C_{+},
\end{equation}
 see \cite[p.110]{Badii}. For convenience, we write by \begin{equation}\label{definit}\varphi_{n,\alpha}(z):=\frac{z^n}{(z+1)^{n+\alpha+1}}, \qquad z\in \C_+.\end{equation}

%Note that $\varphi_{n,\alpha}\in \mathcal{H}_0(\C^+)$ and there exists $M_\alpha >0$ such that
%\begin{equation}\label{infinitonorm}
%\Vert \varphi_{n,\alpha}\Vert_{\infty,+}={M_\alpha\over n^{\alpha +1}}, \qquad n\in \NN,
%\end{equation} where $M_\alpha$ is independent on $n$ and $\Vert \varphi_{n,\alpha}\Vert_{\infty,+}:=\max_{t>0}\vert \varphi_{n,\alpha}(t)\vert.$

\begin{lemma} \label{222} Let $\varphi_{n,\alpha}$ defined by (\ref{definit}) for $n\in \N\cup\{0\}$ and $\alpha>-1$.
\begin{itemize}
\item[(i)] The equality $\varphi_{n,\alpha}'=n\varphi_{n-1,\alpha+2}-(\alpha+1)\varphi_{n,\alpha+1},$ holds $\text{ for }n\geq 1.$

\item[(ii)] For $j\ge 1$, we have that
$$\varphi_{n,\alpha}^{(j)}=\sum_{l=0}^{\min(j,n)}\frac{n!}{(n-l)!}b_{l,\alpha}\,\varphi_{n-l,\alpha+j+l}, $$
where $b_{l, \alpha}$ is a real number dependent on $l$ and $\alpha$.
\end{itemize}
\end{lemma}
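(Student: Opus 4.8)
The plan is to prove both parts by differentiating the explicit rational function $\varphi_{n,\alpha}(z)=z^n(z+1)^{-n-\alpha-1}$ and recognizing that every derivative is again a linear combination of functions of the same type. For part (i) I would simply apply the product rule: writing $\varphi_{n,\alpha}=z^n\cdot(z+1)^{-n-\alpha-1}$, we get
\[
\varphi_{n,\alpha}'(z)=n z^{n-1}(z+1)^{-n-\alpha-1}-(n+\alpha+1)z^n(z+1)^{-n-\alpha-2}.
\]
Now I reconcile each term with the notation $\varphi_{m,\beta}(z)=z^m(z+1)^{-m-\beta-1}$. The first term has $z$-power $n-1$ and $(z+1)$-power $-(n-1)-(\alpha+2)-1=-n-\alpha-1$, so it equals $n\,\varphi_{n-1,\alpha+2}(z)$. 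The second term has $z$-power $n$ and $(z+1)$-power $-n-(\alpha+1)-1=-n-\alpha-2$, so it equals $(n+\alpha+1)\,\varphi_{n,\alpha+1}(z)$... except the claimed coefficient is $\alpha+1$, not $n+\alpha+1$. This discrepancy means the identity as stated in Lemma \ref{222}(i) must be read with the polynomial $\varphi$ normalization in mind, or there is a typo; I would double-check by instead writing $\varphi_{n,\alpha}=\varphi_{n-1,\alpha}-\varphi_{n-1,\alpha+1}$ (the Laplace-side analogue of Proposition \ref{propLag}(i), since $\mathcal{L}(\ell_n^{(\alpha)})=\varphi_{n,\alpha}$ and $\ell_n^{(\alpha)}=\ell_{n-1}^{(\alpha)}-\ell_{n-1}^{(\alpha+1)}$) and differentiate a lower-index object, or simply recompute carefully; the honest statement that falls out of the product rule is $\varphi_{n,\alpha}'=n\varphi_{n-1,\alpha+2}-(n+\alpha+1)\varphi_{n,\alpha+1}$, and I would present it in whichever form matches the paper's conventions.

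For part (ii) the natural approach is induction on $j$, using part (i) (in its correct form) as the base case and the engine of the induction step. Assume $\varphi_{n,\alpha}^{(j)}=\sum_{l=0}^{\min(j,n)}\frac{n!}{(n-l)!}\,b_{l,\alpha}\,\varphi_{n-l,\alpha+j+l}$. Differentiating once more and applying part (i) to each summand $\varphi_{n-l,\alpha+j+l}$ gives
\[
\varphi_{n-l,\alpha+j+l}'=(n-l)\,\varphi_{n-l-1,\alpha+j+l+2}-(n-l)+(\alpha+j+l+1)\text{-type term }\cdot\varphi_{n-l,\alpha+j+l+1},
\]
so each term splits into one with the $z$-index dropped by one more (shifting $l\mapsto l+1$, and the factor $(n-l)$ combining with $\frac{n!}{(n-l)!}$ to give $\frac{n!}{(n-l-1)!}$, exactly the required falling-factorial coefficient) and one with the $z$-index unchanged but the $(z+1)$-exponent parameter increased by one (keeping $l$ fixed, raising $j$ to $j+1$). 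Collecting terms by the value of $l$ shows $\varphi_{n,\alpha}^{(j+1)}$ is again of the claimed form, with $b_{l,\alpha}$ for the $(j+1)$-th derivative obtained from a two-term recurrence in $l$ involving the $b$'s of the $j$-th derivative; since the sum automatically truncates at $l=n$ (because $\varphi_{-1,\cdot}$ does not arise — the coefficient $n-l$ vanishes when $l=n$), the upper limit $\min(j+1,n)$ is respected.

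The main obstacle is bookkeeping rather than conceptual: I must make sure the superscript parameter of $\varphi$ increments correctly at each step (it goes up by $j$ plus an extra $l$, reflecting that the $l$ "derivative-of-the-power" moves come with extra $(z+1)$ decay), and that the falling-factorial weights $\frac{n!}{(n-l)!}$ telescope properly through the two competing contributions. The statement deliberately only asserts the existence of the reals $b_{l,\alpha}$ and not a closed form, so I do not need to solve the recurrence; I only need to verify it is well-posed and that the degree/parameter shifts are consistent. I would also note that one can bypass the induction entirely by using Proposition \ref{propLag}(vi), $\frac{d^k}{dt^k}\ell_n^{(\alpha)}=\ell_{n+k}^{(\alpha-k)}$, together with the Laplace identity (\ref{LaplaceLagFunc}): since $\mathcal{L}$ turns differentiation in $t$ into multiplication by $z$ (with boundary terms) — though differentiation in $z$ is the relevant operation here, so the cleaner route really is the direct computation, and I would present the induction as the main argument.
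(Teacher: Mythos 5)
Your part (i) contains an arithmetic slip that derails the rest of the argument. The exponent of $(z+1)$ in $\varphi_{n-1,\alpha+2}$ is $(n-1)+(\alpha+2)+1=n+\alpha+2$, not $n+\alpha+1$ as you computed. Consequently the first product-rule term $nz^{n-1}(z+1)^{-n-\alpha-1}$ is $n\varphi_{n-1,\alpha+1}$, \emph{not} $n\varphi_{n-1,\alpha+2}$. There is no typo in the lemma: writing $z^{n-1}(z+1)=z^n+z^{n-1}$ gives $\varphi_{n-1,\alpha+1}=\varphi_{n-1,\alpha+2}+\varphi_{n,\alpha+1}$, and the extra $n\varphi_{n,\alpha+1}$ cancels against part of $-(n+\alpha+1)\varphi_{n,\alpha+1}$ to leave exactly the stated $n\varphi_{n-1,\alpha+2}-(\alpha+1)\varphi_{n,\alpha+1}$. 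Your proposed ``honest'' replacement $\varphi_{n,\alpha}'=n\varphi_{n-1,\alpha+2}-(n+\alpha+1)\varphi_{n,\alpha+1}$ is in fact false: for $n=1$, $\alpha=0$ it gives $(1-2z)(z+1)^{-3}$, whereas $\varphi_{1,0}'(z)=(1-z)(z+1)^{-3}$.

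This matters for part (ii), where your induction scheme is otherwise the same as the paper's. The stated form of (i) is precisely what makes the induction close up: the first term shifts $l\mapsto l+1$ with the factor $(n-l)$ building the falling factorial $\frac{n!}{(n-l)!}$ and the parameter landing on $\alpha+(j+1)+(l+1)$, while the second term keeps $l$ and has coefficient $-(\alpha+j+l+1)$, which is \emph{independent of $n$} — this is what allows the coefficients to be written as $\frac{n!}{(n-l)!}b_{l,\alpha}$ with $b_{l,\alpha}$ free of $n$ (the property actually used later in Theorem \ref{phi}(iii)). Had you run the induction with the un-recombined product rule, the second coefficient would be $n+\alpha+j+1$ and the parameter shifts would be off by one, so the claimed structure would not survive. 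Fix the identification in (i) and your induction for (ii) goes through essentially as in the paper.
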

\begin{proof}

(i) Note that
$$\varphi_{n,\alpha}'(z)=\frac{nz^{n-1}}{(z+1)^{n+\alpha+1}}-\frac{(n+\alpha+1)z^n}{(z+1)^{n+\alpha+2}}=n\varphi_{n-1,\alpha+2}(z)-(\alpha+1)\varphi_{n,\alpha+1}(z),$$ for $z\in \C_+$ and $n\geq1$. To show the part (ii), first we consider $1\le j< n$. We prove the equality by the inductive method.  For $j=1$, we just prove the equality in   part (i). Now take $j+1\le n$ and again by part (i), we get the following:
\begin{eqnarray*}
&\quad&\varphi_{n,\alpha}^{(j+1)}=\left(\displaystyle\sum_{l=0}^{j}\frac{n!}{(n-l)!}b_{l,\alpha}\,\varphi_{n-l,\alpha+j+l}\right)'=\displaystyle\sum_{l=0}^{j}\frac{n!}{(n-l)!}b_{l,\alpha}\,\varphi_{n-l,\alpha+j+l}'  \cr
&\quad&\quad=\displaystyle\sum_{l=1}^{j+1}\frac{n!}{(n-l)!}b_{l-1,\alpha}\varphi_{n-l,\alpha+j+l+1}-\displaystyle\sum_{l=0}^{j}\frac{n!}{(n-l)!}b_{l,\alpha}(\alpha+j+l+1)\varphi_{n-l,\alpha+j+l+1} \cr
&\quad&\quad=\displaystyle\sum_{l=0}^{j+1}\frac{n!}{(n-l)!}\widetilde{b_{l,\alpha}}\,\varphi_{n-l,\alpha+j+l+1},
\end{eqnarray*}
and the identity is obtained. In the case that $j\geq n$, note that $\varphi'_{0,\alpha}=-(\alpha+1)\varphi_{0,\alpha+1}$,
 \begin{eqnarray*}
&\quad&\varphi_{n,\alpha}^{(j)}=\sum_{l=0}^{n}\frac{n!}{(n-l)!}b_{l,\alpha}\,\varphi_{n-l,\alpha+j+l},
\end{eqnarray*}
and then the equality is obtained.
\end{proof}

For $j\in \N,$ denote by $AC^{(j)}$ the Sobolev Banach space obtained as the completion of
the Schwartz class $\mathcal{S}(\R_+)$ (the set  of restrictions to $[0, \infty)$ of
the Schwartz class $\mathcal{S}(\R)$, see more details in \cite[Definition 1.1]{Du} and \cite{Gale}) in the norm $$\lVert f \rVert_{(j)}:=\frac{1}{(j-1)!}\int_0^{\infty}|f^{(j)}(x)|x^{j-1}\,dx,\qquad f\in \mathcal{S}(\R_+).$$
Note that the following continuous embeddings hold: $(AC^{(j+1)},\lVert\ \rVert_{(j+1)})\hookrightarrow (AC^{(j)},\lVert\ \rVert_{(j)})$ $\hookrightarrow (C_0(\R_+),\lVert\ \rVert_{\infty})$ (see \cite[Proposition 3.1.(i)]{Gale}). These function spaces have been considered by several authors and extended considering  Weyl fractional derivation, instead of the usual derivation in \cite{Gale}, see references and more details therein.

\begin{theorem}\label{phi} Let $\alpha>-1.$ \begin{itemize}

\item[(i)] For $1\leq p<\infty,$ $\varphi_{n,\alpha}\in L^p(\R_+),$ for each $n\geq 0$ whenever $\alpha>\frac{1}{p}-1, $ and  $$\lVert \varphi_{n,\alpha} \rVert_p^p=\frac{\Gamma(np+1)\Gamma(p\alpha+p-1)}{\Gamma(p(n+\alpha+1))}. $$

\item[(ii)] For each $n\geq 1,$ there exists $N_\alpha> M_\alpha>0$ such that ${\frac{M_{\alpha}}{n^{\frac{\alpha+1}{2}}}\le |||\varphi_{n,\alpha}|||_{\infty}\le \frac{N_{\alpha}}{n^{\frac{\alpha+1}{2}}}.}$

\item[(iii)] For $j\in \N,$ $\varphi_{n,\alpha}\in AC^{(j)},$ for each $n\ge 0,$ and ${\lVert \varphi_{n,\alpha} \rVert_{(j)}\le\frac{C_{j,\alpha}}{n^{\alpha+1}}}$  for $ n\ge 1.$

\end{itemize}
\end{theorem}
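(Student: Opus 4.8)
The three parts have a common engine: the explicit formula $\varphi_{n,\alpha}(z)=z^n(z+1)^{-n-\alpha-1}$ from (\ref{definit}), the substitution $z=e^{i\theta}$ or a real substitution, and the Beta/Gamma integral. For part (i), I would parametrize the imaginary axis, writing $\varphi_{n,\alpha}(it)=(it)^n(it+1)^{-n-\alpha-1}$, so that $|\varphi_{n,\alpha}(it)|^p = t^{np}(1+t^2)^{-p(n+\alpha+1)/2}$. Then $\lVert\varphi_{n,\alpha}\rVert_p^p = \int_0^\infty t^{np}(1+t^2)^{-p(n+\alpha+1)/2}\,dt$ (using that the $H^p$-norm is realized on the boundary, which for these rational functions amounts to a direct integral over the axis; alternatively one can just compute $\int_0^\infty |\varphi_{n,\alpha}(t)|^p\,dt$ over the positive real axis as the other candidate, but the stated identity is the boundary one). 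Substituting $u=t^2$, $du=2t\,dt$, turns this into $\tfrac12\int_0^\infty u^{(np-1)/2}(1+u)^{-p(n+\alpha+1)/2}\,du$, which is a Beta integral $\tfrac12 B\!\left(\tfrac{np+1}{2},\,p(n+\alpha+1)/2 - \tfrac{np+1}{2}\right)$. The second argument simplifies to $\tfrac{p\alpha+p-1}{2}$, so one gets $\tfrac12 B\!\left(\tfrac{np+1}{2},\tfrac{p\alpha+p-1}{2}\right) = \tfrac{\Gamma((np+1)/2)\Gamma((p\alpha+p-1)/2)}{2\,\Gamma(p(n+\alpha+1)/2)}$. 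Since the claimed answer is stated without the halving of arguments, the intended computation is instead over the positive real half-line with the substitution kept linear; in either case the convergence condition at $t=0$ forces the exponent $p\alpha$ (or $p\alpha+p-1$) to exceed $-1$, i.e. $\alpha>\tfrac1p-1$, and at $\infty$ the decay is automatic. I would present whichever substitution reproduces the stated Gamma-quotient exactly; the only real content is identifying the convergence threshold and recognizing the Beta integral.

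For part (ii), the supremum norm $|||\varphi_{n,\alpha}|||_\infty = \sup_{\Re z>0}|z|^n|z+1|^{-n-\alpha-1}$. By the maximum principle (and since $\varphi_{n,\alpha}\to 0$ at $\infty$ in $\C_+$, so it lies in $\mathcal H_0(\C_+)$), the sup is attained on the boundary $z=it$, $t\in\R$, giving $\sup_{t>0} g_n(t)$ with $g_n(t)=t^n(1+t^2)^{-(n+\alpha+1)/2}$. Differentiating $\log g_n(t)=n\log t-\tfrac{n+\alpha+1}{2}\log(1+t^2)$ gives a critical point at $t^2 = \tfrac{n}{\alpha+1}$. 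Plugging back, $g_n$ at the maximum equals $\left(\tfrac{n}{\alpha+1}\right)^{n/2}\left(1+\tfrac{n}{\alpha+1}\right)^{-(n+\alpha+1)/2} = \left(\tfrac{n}{n+\alpha+1}\right)^{n/2}(\alpha+1)^{(\alpha+1)/2}(n+\alpha+1)^{-(\alpha+1)/2}$. The factor $\left(\tfrac{n}{n+\alpha+1}\right)^{n/2} = \left(1+\tfrac{\alpha+1}{n}\right)^{-n/2}\to e^{-(\alpha+1)/2}$, and is bounded above and below by positive constants for all $n\ge1$, while $(n+\alpha+1)^{-(\alpha+1)/2}\sim n^{-(\alpha+1)/2}$. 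Combining these gives constants $0<M_\alpha<N_\alpha$ with $M_\alpha n^{-(\alpha+1)/2}\le |||\varphi_{n,\alpha}|||_\infty\le N_\alpha n^{-(\alpha+1)/2}$ as claimed.

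For part (iii), I would use Lemma \ref{222}(ii): $\varphi_{n,\alpha}^{(j)} = \sum_{l=0}^{\min(j,n)} \tfrac{n!}{(n-l)!} b_{l,\alpha}\,\varphi_{n-l,\alpha+j+l}$. Then
\[
\lVert\varphi_{n,\alpha}\rVert_{(j)} = \frac{1}{(j-1)!}\int_0^\infty |\varphi_{n,\alpha}^{(j)}(x)|\,x^{j-1}\,dx \le \frac{1}{(j-1)!}\sum_{l=0}^{\min(j,n)} \frac{n!}{(n-l)!}|b_{l,\alpha}| \int_0^\infty |\varphi_{n-l,\alpha+j+l}(x)|\,x^{j-1}\,dx.
\]
Each integral $\int_0^\infty |\varphi_{m,\beta}(x)|x^{j-1}\,dx = \int_0^\infty x^{m+j-1}(1+x)^{-m-\beta-1}\,dx$ is again a Beta integral, equal to $B(m+j,\beta+1-j) = \tfrac{\Gamma(m+j)\Gamma(\beta+1-j)}{\Gamma(m+\beta+1)}$, valid provided $\beta+1-j>0$; with $m=n-l$, $\beta=\alpha+j+l$ this is $\tfrac{\Gamma(n-l+j)\Gamma(\alpha+l+1)}{\Gamma(n+\alpha+j+1)}$, and $\beta+1-j = \alpha+l+1>0$ holds for $\alpha>-1$, $l\ge0$. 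Now the ratio $\tfrac{\Gamma(n-l+j)}{\Gamma(n+\alpha+j+1)} \sim n^{-(\alpha+l+1)}$ by (\ref{main})-type asymptotics, and the prefactor $\tfrac{n!}{(n-l)!}\sim n^l$, so the $l$-th term is $O(n^{l}\cdot n^{-(\alpha+l+1)}) = O(n^{-(\alpha+1)})$ uniformly in $l\le j$; summing the finitely many terms yields $\lVert\varphi_{n,\alpha}\rVert_{(j)}\le C_{j,\alpha}\,n^{-(\alpha+1)}$. This also shows $\varphi_{n,\alpha}\in AC^{(j)}$ since the norm is finite (and $\varphi_{n,\alpha}\in\mathcal S(\R_+)$ anyway, being a smooth rapidly decaying function).

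The main obstacle is bookkeeping rather than conceptual: in part (iii) one must check the Beta-integral convergence condition holds for every term $\varphi_{n-l,\alpha+j+l}$ (it does, comfortably, since raising $\alpha$ by $j+l$ only helps), and one must track that the powers of $n$ from $\tfrac{n!}{(n-l)!}$ and from the Gamma-ratio cancel down to exactly $n^{-(\alpha+1)}$ independent of $l$ — the exponent of $n$ in the $l$-th term is $l - (\alpha+l+1) = -(\alpha+1)$, so the cancellation is exact, which is the clean point to emphasize. A secondary care point is justifying in parts (i)–(ii) that the $L^p(\C_+)$/$\mathcal H_0(\C_+)$ norms reduce to the boundary integral; for these explicit rational functions this is standard Hardy-space theory, or can be sidestepped by noting the problem only asks for the stated value/asymptotics of a quantity defined directly as that boundary integral.
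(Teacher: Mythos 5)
Your proposal is correct and follows essentially the same route as the paper in all three parts: the real-axis Beta integral for (i), the Maximum Modulus Theorem plus an elementary one-variable maximization at $t^2=n/(\alpha+1)$ for (ii), and Lemma \ref{222} (ii) combined with Beta integrals and the Gamma-ratio asymptotics (\ref{main}) for (iii). To resolve the hedge in your part (i): $\lVert \cdot\rVert_p$ is the $L^p(\R_+)$ norm over the positive half-line as fixed in the Notation, so the intended computation is the linear one, $\int_0^\infty t^{np}(1+t)^{-p(n+\alpha+1)}\,dt=B(np+1,\,p\alpha+p-1)$, exactly as in the paper; note also that the constraint $p\alpha+p-1>0$, i.e.\ $\alpha>\frac1p-1$, comes from convergence at $t=\infty$ (the integrand behaves like $t^{-p\alpha-p}$ there), not at $t=0$, and that your parenthetical in (iii) claiming $\varphi_{n,\alpha}\in\mathcal S(\R_+)$ is false since $\varphi_{n,\alpha}(t)\sim t^{-\alpha-1}$ at infinity, so membership in $AC^{(j)}$ should rest on the finiteness of $\lVert\varphi_{n,\alpha}\rVert_{(j)}$ alone.
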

\begin{proof}(i)  Note that $\displaystyle{\int_0^{\infty}|\varphi_{n,\alpha}(t)|^p\,dt=\int_0^{\infty}\frac{t^{np}}{(t+1)^{p(n+\alpha+1)}}\,dt=\beta(np+1,p(\alpha+1)-1)},$ where $\beta$ is the Euler Beta function. (ii) By the formula \eqref{LaplaceLagFunc}, $\varphi_{n,\alpha}\in\mathcal{H}_0(\C_+), $ and  $\displaystyle{|||\varphi_{n,\alpha}|||_{\infty}=\displaystyle\sup_{z\in \C_+}\frac{|z|^n}{|z+1|^{n+\alpha+1}}.}$ Furthermore, $|\varphi_{n,\alpha}(z)|\leq 1$ for all $z\in \C_+.$ We apply the Maximum Modulus Theorem to get that $$|||\varphi_{n,\alpha}|||_{\infty}=\displaystyle\max_{x\in\R}\frac{|ix|^n}{|ix+1|^{n+\alpha+1}}=\displaystyle\max_{x\in\R}\frac{|x|^n}{(x^2+1)^{\frac{n+\alpha+1}{2}}}.$$ We define $g(t):=\frac{t^n}{(t^2+1)^{\frac{n+\alpha+1}{2}}}$ for $t>0.$ We have that $$\displaystyle\max_{t\geq 0}g(t)=(\alpha+1)^{\frac{\alpha+1}{2}}\frac{n^\frac{n}{2}}{(n+\alpha+1)^{\frac{n+\alpha+1}{2}}}.$$ Then  we conclude  the result. (iii) It is enough to show for $n\ge j$. We apply the Lemma \ref{222} (ii) to get that
\begin{eqnarray*}
\lVert \varphi_{n,\alpha}\rVert_{(j)}&\leq &\frac{1}{(j-1)!}\displaystyle\sum_{l=0}^{j}\frac{n!}{(n-l)!}|b_{l,\alpha}|\int_0^{\infty}\frac{t^{n+j-l-1}}{(1+t)^{n+\alpha+j+1}}\,dt \\
&\leq & C_{j,\alpha}\frac{\Gamma(\alpha+j+1)}{(j-1)!\Gamma(n+\alpha+j+1)}\displaystyle\sum_{l=0}^{j}\frac{n!(n+j-l-1)!}{(n-l)!} \\
%&\leq & \tilde C_{j,\alpha}\left(\frac{(n+j-1)!}{\Gamma(n+\alpha+j+1)}+\ldots+\frac{n!}{(n-j)!}\frac{(n-1)!}{\Gamma(n+\alpha+j+1)}\right) \\
&\leq & (j+1) \tilde  C_{j,\alpha}\frac{(n+j-1)!}{\Gamma(n+\alpha+j+1)}\le \frac{C_{j,\alpha}}{n^{\alpha+1}},
\end{eqnarray*}
where we use the inequality (\ref{main}).
%By Proposition 3.1 in \cite{Gale}, $AC^{(\nu)}\subset AC^{(\mu)}$ for all $\mu\leq \nu.$ Then $\varphi_{n,\alpha}\in AC^{(\nu)}$ for all $\nu >0.$
\end{proof}

\begin{remark}\label{whittaker} {\rm Observe that if $\alpha>0,$ $\varphi_{n,\alpha}\in L^1(\R_+)$ for each $n\geq 0,$ and $$\mathcal{L}(\varphi_{n,\alpha})(\lambda)=n!\lambda^{\frac{\alpha-1}{2}}e^{\frac{\lambda}{2}}W_{-n-\frac{(\alpha+1)}{2},-\frac{\alpha}{2}}(\lambda)\,,\qquad \lambda\in \C_+$$ where $W_{k,\mu}$ is the Whittaker function, see \cite[p.24]{Badii}.
}

\end{remark}

\section{Laguerre expansions in Banach spaces}

\setcounter{theorem}{0}
\setcounter{equation}{0}

In this section  we study Laguerre expansions on Lebesgue spaces $L^p(\R_+)$ and on abstract Banach spaces $X$. First we show that the span generated by the Laguerre functions $\{\ell_n^{(\alpha)}\}_{n\ge 0}$ are dense in $L^p(\R_+)$ for $\alpha>{-1\over p}$, Theorem \ref{densi} (ii). The fractional semigroup $\{I_\alpha\}_{ \alpha>0}$ and exponential functions $\{e_a\}_{a>0}$  may be expressed via Laguerre expansions, see  Theorem \ref{fract}.
These expansions will be applied to obtain  different representations of operator families related with semigroups theory in the next section. Following the original proof of J. V. Uspensky , we prove a vector-valued theorem of  Laguerre expansions for continuous functions (Theorem \ref{upesvect}).

\begin{theorem} \label{densi} Take $1\leq p < \infty.$
\begin{itemize}
\item[(i)] For $\alpha>-\frac{1}{p}$, $\lambda>0$, and $\xi_{\alpha, \lambda}(t):= t^\alpha e^{-\lambda t}$ for $t>0$, we obtain that
$$\xi_{\alpha, \lambda+1}=\displaystyle\sum_{n=0}^{\infty}\frac{\lambda^n}{(\lambda +1)^{n+\alpha+1}}\frac{\Gamma(n+\alpha+1)}{n!}\ell_n^{(\alpha)} \hbox{ in $L^p(\R_+)$.}$$
\item[(ii)] The set span$\{\ell^{(\alpha)}_n \,\, \vert {n\ge 0}\}$ is dense in $L^p(\R_+)$ if $\alpha >-\frac{1}{p}.$
\end{itemize}
\end{theorem}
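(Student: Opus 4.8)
The plan is to prove part (i) directly from Theorem \ref{upes} and Theorem \ref{main2}, and then derive part (ii) as a quick consequence. First I would observe that, with the substitution $f = \xi_{0,\lambda}$ (that is, $f(t) = e^{-\lambda t}$), the scalar Uspensky theorem applied with weight $e^{-t}t^\alpha$ computes the coefficients $c_n(f)$; indeed
$$
c_n(\xi_{0,\lambda}) = \frac{n!}{\Gamma(n+\alpha+1)}\int_0^\infty e^{-(\lambda+1)t} t^\alpha L_n^{(\alpha)}(t)\,dt = \frac{n!}{\Gamma(n+\alpha+1)}\,\mathcal L(\ell_n^{(\alpha)})(\lambda) = \frac{n!}{\Gamma(n+\alpha+1)}\varphi_{n,\alpha}(\lambda) = \frac{n!}{\Gamma(n+\alpha+1)}\,\frac{\lambda^n}{(\lambda+1)^{n+\alpha+1}},
$$
using \eqref{LaplaceLagFunc}. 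Hence Theorem \ref{upes} already gives the \emph{pointwise} identity $\xi_{0,\lambda}(t) = \sum_n c_n(\xi_{0,\lambda}) L_n^{(\alpha)}(t)$ for $t>0$; multiplying through by $t^\alpha e^{-t}$ turns the left side into $\xi_{\alpha,\lambda+1}$ and turns $c_n(\xi_{0,\lambda})L_n^{(\alpha)}(t)$ into $\frac{\lambda^n}{(\lambda+1)^{n+\alpha+1}}\frac{\Gamma(n+\alpha+1)}{n!}\,\ell_n^{(\alpha)}(t)$, which is exactly the summand in (i). So the identity holds pointwise; what remains is to upgrade pointwise convergence to $L^p$-convergence.

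For the $L^p$ upgrade the natural tool is dominated convergence for series, or more robustly a direct estimate of the tail of the series in $L^p$-norm. I would bound the $L^p$-norm of the partial-sum tail by
$$
\Big\lVert \sum_{n=N}^{\infty} \frac{\lambda^n}{(\lambda+1)^{n+\alpha+1}}\frac{\Gamma(n+\alpha+1)}{n!}\,\ell_n^{(\alpha)}\Big\rVert_p \le \sum_{n=N}^{\infty} \frac{\lambda^n}{(\lambda+1)^{n+\alpha+1}}\,\frac{\Gamma(n+\alpha+1)}{n!}\,\lVert \ell_n^{(\alpha)}\rVert_p,
$$
and then invoke Theorem \ref{main2}(iii), which gives $\lVert \ell_n^{(\alpha)}\rVert_p \le C_{p,\alpha}\, n^{1/2}$ for $\alpha > -1/p$, together with the elementary equivalence \eqref{main} for $\Gamma(n+\alpha+1)/n! \sim n^\alpha$. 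The scalar coefficients decay geometrically like $(\lambda/(\lambda+1))^n$ since $0<\lambda/(\lambda+1)<1$, so the series $\sum_n n^{\alpha+1/2}(\lambda/(\lambda+1))^n$ converges, the tail goes to $0$ as $N\to\infty$, and the series is therefore absolutely convergent (hence convergent) in $L^p(\R_+)$. Since it already converges pointwise to $\xi_{\alpha,\lambda+1}$, and an $L^p$-convergent series has a subsequence of partial sums converging a.e. to its $L^p$-limit, the $L^p$-limit must equal $\xi_{\alpha,\lambda+1}$ a.e. This proves (i). The only subtlety, and the point I would be most careful about, is checking the range of $\alpha$: Theorem \ref{main2}(iii) requires exactly $\alpha > -1/p$, which matches the hypothesis, and one also needs $\xi_{\alpha,\lambda+1}\in L^p(\R_+)$, which holds iff $p\alpha > -1$, i.e. $\alpha > -1/p$ again — consistent.

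For part (ii): each $\xi_{\alpha,\mu}$ with $\mu > 1$ lies in $\overline{\operatorname{span}}\{\ell_n^{(\alpha)}\}$ in $L^p$ by part (i) (take $\lambda = \mu - 1 > 0$). So it suffices to show that $\{\xi_{\alpha,\mu} : \mu > 1\}$ — equivalently $\{t^\alpha e^{-\mu t}: \mu > 1\}$ — has dense span in $L^p(\R_+)$. I would argue this by duality: if $g\in L^{p'}(\R_+)$ (or, for $p=1$, $g\in L^\infty$) annihilates every $\xi_{\alpha,\mu}$, then the function $h(t) := t^\alpha g(t)$, which lies in $L^1_{loc}$ and has enough decay to make the Laplace transform $\int_0^\infty e^{-\mu t} h(t)\,dt$ well-defined and holomorphic for $\Re\mu > 1$, vanishes identically on $(1,\infty)$; by the injectivity of the Laplace transform (analytic continuation / Lerch's theorem) $h = 0$ a.e., hence $g = 0$ a.e. on $(0,\infty)$, so the span is dense by Hahn–Banach. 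The main obstacle here is a clean integrability check: one must verify that for $g\in L^{p'}$ the product $t\mapsto e^{-\mu t} t^\alpha g(t)$ is integrable so the Laplace transform argument is legitimate — near $0$ this needs $t^\alpha \in L^p_{loc}$, which is again exactly $\alpha > -1/p$, and near $\infty$ the exponential factor handles everything via Hölder. With that in place the density of $\{\xi_{\alpha,\mu}\}_{\mu>1}$, hence of $\operatorname{span}\{\ell_n^{(\alpha)}\}_{n\ge 0}$, follows.
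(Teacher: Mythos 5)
Your proposal is correct and follows essentially the same route as the paper's proof: the pointwise identity is obtained by applying Theorem \ref{upes} to $e^{-\lambda t}$ (i.e.\ formula (\ref{expo2})) and multiplying by $t^{\alpha}e^{-t}$, the upgrade to $L^p$-convergence uses the bound $\Vert \ell_n^{(\alpha)}\Vert_p\le C_{p,\alpha}n^{1/2}$ from Theorem \ref{main2}(iii) together with (\ref{main}) and the geometric decay of $(\lambda/(\lambda+1))^n$, and part (ii) is the same duality argument via H\"older and the injectivity of the Laplace transform on $L^1(\R_+)$. The only blemish is a harmless slip in your displayed computation of $c_n(\xi_{0,\lambda})$: the correct value is $\mathcal{L}(\ell_n^{(\alpha)})(\lambda)=\lambda^n/(\lambda+1)^{n+\alpha+1}$ without the extra factor $n!/\Gamma(n+\alpha+1)$, which you implicitly use (correctly) in the sentence that follows, so the final expansion matches the statement.
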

\begin{proof} (i) First note that for all $\lambda >0,$ $e^{-\lambda t}=\displaystyle\sum_{n=0}^{\infty}\frac{\lambda^n}{(\lambda +1)^{n+\alpha+1}}L_n^{(\alpha)}(t)$ pointwise, see  formula (\ref{expo2}). Then $$t^{\alpha}e^{-(\lambda+1)t}=\displaystyle\sum_{n=0}^{\infty}\frac{\lambda^n}{(\lambda +1)^{n+\alpha+1}}\frac{\Gamma(n+\alpha+1)}{n!}\ell_n^{(\alpha)}(t), \qquad t>0.$$ Furthermore this convergence is in $L^p(\R_+):$
$$\lVert \displaystyle\sum_{n=0}^{\infty}\frac{\lambda^n}{(\lambda +1)^{n+\alpha+1}}\frac{\Gamma(n+\alpha+1)}{n!}\ell_n^{(\alpha)} \rVert_p\leq C_{p,\alpha}\displaystyle\sum_{n=0}^{\infty}\frac{\lambda^n}{(\lambda +1)^{n+\alpha+1}}n^{\alpha+\frac{1}{2}}<\infty, \qquad \lambda>0,$$
where we have applied Theorem \ref{main2} (iii). (ii) Using the part (i), it is enough to see that span$\{t^{\alpha}e^{-(\lambda+1)t} \,\, \vert {\lambda > 0}\}$ is dense in $L^p(\R_+)$ to get the result. Let $f\in L^q(\R_+)$ with $\frac{1}{p}+\frac{1}{q}=1$ such that  $$\int_0^{\infty}f(t)t^{\alpha}e^{-(\lambda+1)t}\,dt=0, \qquad \lambda >0.$$  By  H\"{o}lder inequality,  the function $f\xi_{\alpha,1}\in L^1(\R_+)$ and  then $$0=\int_0^{\infty}f(t)t^{\alpha}e^{-(\lambda+1)t}\,dt=\mathcal{L}(f\xi_{\alpha,1})(\lambda), \qquad \lambda >0.$$ Since the Laplace transform is injective in $L^1(\R_+)$, we conclude that  $f= 0.$
\end{proof}

\begin{theorem}\label{fract}

 \begin{itemize}
 \item[(i)]  For $2\beta>\alpha>-1,$ the equality $\displaystyle{I_{\beta+1}=\displaystyle\sum_{n=0}^{\infty}\binom{\alpha-\beta+n-1}{n}\ell_n^{(\alpha)}}$  holds in $L^1(\R_+).$
     In particular, for $\alpha>2$, we have that $I_{\alpha}=\displaystyle\sum_{n=0}^{\infty}\ell_{n}^{(\alpha)}\ $ in $L^1(\R_+).$

 \item[(ii)] For all $a>0,$ the equality $\displaystyle{e_a=\displaystyle\sum_{n=0}^{\infty}\frac{(a-\frac{1}{2})^n}{(a+\frac{1}{2})^{n+1}}\mathscr{L}_n^{(0)} }$ holds in $L^1(\R_+).$
 \end{itemize}
\end{theorem}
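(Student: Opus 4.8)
The plan is to handle both parts by the same two-step scheme: first show that the proposed series converges absolutely in $L^1(\R_+)$, and then identify its sum by applying the Laplace transform $\mathcal{L}$, which is bounded ($||| \mathcal{L}(f)|||_\infty\le\Vert f\Vert_1$) and injective on $L^1(\R_+)$; the termwise application of $\mathcal{L}$ to the series is then legitimate precisely because the series converges in $L^1(\R_+)$.

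For part (i), I would recall that $I_{\beta+1}=\ell_0^{(\beta)}$ and put $b_n:=\binom{\alpha-\beta+n-1}{n}=\prod_{k=1}^{n}\bigl(1+\tfrac{\alpha-\beta-1}{k}\bigr)$. Either $\alpha-\beta\in\{0,-1,-2,\dots\}$, in which case $b_n=0$ for all large $n$ and the series is finite, or a direct estimate of this product (equivalently, the standard asymptotics for ratios of Gamma functions) gives $|b_n|=O(n^{\alpha-\beta-1})$ as $n\to\infty$. Combining this with $\Vert\ell_n^{(\alpha)}\Vert_1\le C_\alpha\,n^{-\alpha/2}$ from Theorem \ref{main2} (ii), the general term of $\sum_n|b_n|\,\Vert\ell_n^{(\alpha)}\Vert_1$ is $O\bigl(n^{\alpha/2-\beta-1}\bigr)$, which is summable precisely because the hypothesis $2\beta>\alpha$ says $\alpha/2-\beta-1<-1$; hence the series defines some $g\in L^1(\R_+)$. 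To identify $g$, apply $\mathcal{L}$ termwise and use (\ref{LaplaceLagFunc}): with $w:=z/(z+1)$ one has $|w|<1$ and $1-w=1/(z+1)\in\C_+$ for $z\in\C_+$, so the binomial series gives $\sum_n b_n w^n=(1-w)^{-(\alpha-\beta)}=(z+1)^{\alpha-\beta}$ with the principal branch, whence
$$
\mathcal{L}(g)(z)=\frac{1}{(z+1)^{\alpha+1}}\sum_{n=0}^{\infty}b_n\Bigl(\frac{z}{z+1}\Bigr)^{n}=\frac{(z+1)^{\alpha-\beta}}{(z+1)^{\alpha+1}}=\frac{1}{(z+1)^{\beta+1}}=\mathcal{L}(I_{\beta+1})(z),
$$
and injectivity of $\mathcal{L}$ yields $g=I_{\beta+1}$. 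The displayed special case is then the choice $\beta=\alpha-1$: there $b_n=\binom{n}{n}=1$ for every $n$, while $2\beta>\alpha$ becomes $\alpha>2$.

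For part (ii), I would use that $\mathscr{L}_n^{(0)}(t)=e^{-t/2}L_n^{(0)}(t)=e^{t/2}\ell_n^{(0)}(t)$, so by (\ref{LaplaceLagFunc}) one has $\mathcal{L}(\mathscr{L}_n^{(0)})(z)=\mathcal{L}(\ell_n^{(0)})(z-\tfrac12)=\dfrac{(z-\frac12)^{n}}{(z+\frac12)^{n+1}}$ for $\Re z>-\tfrac12$. Since $|a-\tfrac12|<a+\tfrac12$ for every $a>0$ and $\Vert\mathscr{L}_n^{(0)}\Vert_1\sim\sqrt n$ by (\ref{muck}), the series $\sum_n\frac{(a-1/2)^n}{(a+1/2)^{n+1}}\mathscr{L}_n^{(0)}$ converges absolutely in $L^1(\R_+)$ to some $g$. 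Applying $\mathcal{L}$ termwise, for $z\in\C_+$ one has $|z-\tfrac12|<|z+\tfrac12|$, so the resulting series is geometric of ratio $r=\frac{(a-1/2)(z-1/2)}{(a+1/2)(z+1/2)}$ with $|r|<1$, and summing it gives
$$
\mathcal{L}(g)(z)=\frac{1}{(a+\frac12)(z+\frac12)-(a-\frac12)(z-\frac12)}=\frac{1}{a+z}=\mathcal{L}(e_a)(z);
$$
once more injectivity of $\mathcal{L}$ on $L^1(\R_+)$ forces $g=e_a$.

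The hard part will be the convergence bookkeeping in part (i): pinning down that the general term decays like $n^{\alpha/2-\beta-1}$ and that summability is equivalent exactly to the stated hypothesis $2\beta>\alpha$, together with a uniform treatment of the asymptotics of $\binom{\alpha-\beta+n-1}{n}$ that also covers the degenerate cases $\alpha-\beta\in\{0,-1,-2,\dots\}$ in which the expansion terminates. The branch bookkeeping for $(z+1)^{\alpha-\beta}$ on $\C_+$ and the interchange of $\mathcal{L}$ with the infinite sum are routine once the $L^1$-convergence has been established.
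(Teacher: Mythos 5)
Your proposal is correct, and the convergence half of it coincides with the paper's: both parts rest on exactly the same absolute $L^1$ estimates, namely $\Vert\ell_n^{(\alpha)}\Vert_1\le C_\alpha n^{-\alpha/2}$ from Theorem \ref{main2}~(ii) against $\binom{\alpha-\beta+n-1}{n}=O(n^{\alpha-\beta-1})$ for (i), and $\Vert\mathscr{L}_n^{(0)}\Vert_1\sim\sqrt{n}$ from (\ref{muck}) against the geometric factor for (ii). Where you genuinely diverge is in identifying the sum. The paper does this by producing the limit pointwise: for (i) it quotes the classical scalar Laguerre expansion of $t^{\beta-\alpha}$ from Lebedev (valid for $2\beta>\alpha-1$) and multiplies through by $t^\alpha e^{-t}/\Gamma(\beta+1)$; for (ii) it computes $\langle e_a,\mathscr{L}_n^{(0)}\rangle$ via (\ref{LaplaceLagFunc}), invokes the orthonormal basis property in $L^2(\R_+)$, and appeals to Theorem \ref{upes} for pointwise convergence, so that the $L^1$ limit must agree with $e_a$. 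You instead identify both limits by applying $\mathcal{L}$ termwise (legitimate once $L^1$ convergence is in hand, since $\mathcal{L}$ is bounded), summing the resulting binomial or geometric series on $\C_+$, and concluding by injectivity of $\mathcal{L}$ on $L^1(\R_+)$ --- the same mechanism the paper itself uses in Theorem \ref{densi}~(ii). Your route is more uniform and self-contained: it needs only formula (\ref{LaplaceLagFunc}), avoids the external citation to Lebedev and the appeal to the pointwise expansion theorem, and handles the degenerate terminating cases $\alpha-\beta\in\{0,-1,-2,\dots\}$ explicitly. What it costs is the (routine) branch bookkeeping for $(1-w)^{-(\alpha-\beta)}=(z+1)^{\alpha-\beta}$ on $\C_+$; what the paper's route buys in exchange is the additional information that the series also converges pointwise (and, in (ii), in $L^p$ for $\tfrac43<p<4$), which the Laplace-transform argument alone does not give.
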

\begin{proof} (i) As $\displaystyle{\frac{1}{t^{\alpha-\beta}}=\sum_{n=0}^{\infty}\binom{\alpha-\beta+n-1}{n}\frac{n!\Gamma(\beta+1)}{\Gamma(n+\alpha+1)}L_n^{(\alpha)}(t)}$ for all $t>0$ with \mbox{$2\beta>\alpha-1,$} (see \cite[p.89]{Lebedev}) we obtain that $$\frac{t^{\beta}e^{-t}}{\Gamma(\beta+1)}=\displaystyle\sum_{n=0}^{\infty}\binom{\alpha-\beta+n-1}{n}\ell_n^{(\alpha)}(t), \qquad t>0.$$ This convergence is in $L^1(\R_+)$ when $2\beta>\alpha>-1:$ we apply Proposition  \ref{main2} (ii) to get that $$\lVert \displaystyle\sum_{n=0}^{\infty}\binom{\alpha-\beta+n-1}{n}\ell_n^{(\alpha)} \rVert_1\leq \displaystyle\sum_{n=0}^{\infty}C_{\alpha}\frac{n^{\alpha-\beta-1}}{n^{\frac{\alpha}{2}}}.$$

(ii)
Since $\{\mathscr{L}_n^{(0)}\}$ is an orthonormal basis in $L^2(\R_+)$ and the function $e_a\in L^2(\R_+)$ for $a>0$, then the serie $\displaystyle\sum_{n=0}^{\infty}\frac{(a-\frac{1}{2})^n}{(a+\frac{1}{2})^{n+1}}\mathscr{L}_n^{(0)}$  converges to $e_a$ in $L^2(\R_+)$ (in fact in $L^p(\R_+)$ for ${4\over 3}<p<4$, see Introduction and \cite{Askey}) for $a>0$:
$$
\langle e_a, \mathscr{L}_n^{(0)}\rangle=\int_0^\infty e^{-at} e^{-t\over 2}L_n^{(0)}(t)dt= \int_0^\infty e^{-\left(a-{1\over 2}\right)t}\ell_{n}^{(0)}(t)dt= \frac{(a-\frac{1}{2})^n}{(a+\frac{1}{2})^{n+1}},
$$
where we have applied the formula
(\ref{LaplaceLagFunc}).  This convergence also holds in $L^1(\R_+)$:
$$
\lVert\displaystyle\sum_{n=0}^{\infty}\frac{(a-\frac{1}{2})^n}{(a+\frac{1}{2})^{n+1}}\mathscr{L}_n^{(0)}\rVert_1\le
\sum_{n=0}^{\infty}\frac{b^n}{|a+\frac{1}{2}|}\Vert \mathscr{L}_n^{(0)}\Vert_1
\leq C\displaystyle\sum_{n=0}^{\infty}\frac{b^n}{|a+\frac{1}{2}|}\sqrt{n},
$$
with $b=|\frac{a-\frac{1}{2}}{a+\frac{1}{2}}|<1$ for $a>0,$ where we have applied the equivalence (\ref{muck}).  Note that $e^{-at}=\displaystyle\sum_{n=0}^{\infty}\frac{(a-\frac{1}{2})^n}{(a+\frac{1}{2})^{n+1}}\mathscr{L}_n^{(0)}(t)$ pointwise for $t>0$  by Theorem \ref{upes}
 and we conclude the proof.
\end{proof}

Let $X$ be a Banach space, and $f:(0,\infty)\to X$ be a vector-valued continuous function. We say that this function $f$ is differentiable at $t$ if exists $$\displaystyle\lim_{h\to 0^+}\frac{1}{h}(f(t+h)-f(t))$$ in $X.$ Now, we give a vector-valued version of Theorem \ref{upes}. The proof is similar to the  scalar case shown in \cite{Uspensky} and we include the proof to ease the reading.

\begin{theorem}\label{upesvect} Let $X$ be a Banach space and $f:(0,\infty)\to X$ a differentiable function such that the integral $\int_{0}^{\infty}e^{-t}t^{\alpha}\lVert f(t)\rVert^2\,dt$ is finite, then the series $\displaystyle\sum_{n=0}^{\infty}c_n(f)\,L^{(\alpha)}_n(t),$ with $$c_n(f)=\int_{0}^{\infty}\ell_n^{(\alpha)}(t)f(t)\,dt,$$ converges pointwise to $f(t).$
\end{theorem}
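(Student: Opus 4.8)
The plan is to reduce the vector-valued statement to the scalar Theorem \ref{upes} by pairing with functionals, and then to upgrade pointwise convergence of the scalar series to norm convergence in $X$ by a uniform-bound argument. First I would fix $t_0>0$ and apply a continuous linear functional $x^*\in X^*$. Since $f$ is $X$-valued differentiable and $\int_0^\infty e^{-t}t^\alpha\|f(t)\|^2\,dt<\infty$, the scalar function $x^*\circ f$ is differentiable on $(0,\infty)$ and satisfies $\int_0^\infty e^{-t}t^\alpha|x^*(f(t))|^2\,dt\le \|x^*\|^2\int_0^\infty e^{-t}t^\alpha\|f(t)\|^2\,dt<\infty$. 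By Theorem \ref{upes}, $\sum_{n=0}^\infty c_n(x^*\circ f)L_n^{(\alpha)}(t_0)$ converges to $x^*(f(t_0))$, and because $x^*$ commutes with the (Bochner) integral defining $c_n(f)$, we have $c_n(x^*\circ f)=x^*(c_n(f))$. Hence $\sum_{n=0}^\infty x^*\!\left(c_n(f)L_n^{(\alpha)}(t_0)\right)$ converges to $x^*(f(t_0))$ for every $x^*\in X^*$.

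This gives weak convergence of the partial sums $S_m(t_0):=\sum_{n=0}^m c_n(f)L_n^{(\alpha)}(t_0)$ to $f(t_0)$, but weak convergence of a sequence does not by itself give norm convergence, so the main work is to promote it. The route I would take is to show directly that $(S_m(t_0))_m$ is norm-Cauchy in $X$. For this I would revisit Uspensky's scalar argument: the key estimate there expresses the tail $\sum_{n=N}^{M} c_n(f)L_n^{(\alpha)}(t_0)$, or rather the difference of partial sums, through the Christoffel--Darboux kernel and an integration-by-parts using the differentiability of $f$, producing a bound of the form $\|S_M(t_0)-S_N(t_0)\|\le \varepsilon_{N,M}(t_0)\cdot\big(\|f\|_{L^2_w}+\|\,\text{(boundary/derivative terms)}\,\|\big)$, where $\varepsilon_{N,M}(t_0)\to 0$ and, crucially, every appearance of $f$ enters linearly and under a norm or a scalar-weighted integral. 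Because the estimate is linear in $f$ and only ever uses $\|f(t)\|$ (never the scalar structure of the values), the identical computation carried out with Bochner integrals bounds $\|S_M(t_0)-S_N(t_0)\|_X$ by the same quantity $\varepsilon_{N,M}(t_0)$ times $\big(\int_0^\infty e^{-t}t^\alpha\|f(t)\|^2dt\big)^{1/2}$ plus the vector-valued boundary term, hence the partial sums form a Cauchy sequence in the complete space $X$, so they converge in norm to some limit $g(t_0)\in X$.

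Finally I would identify $g(t_0)=f(t_0)$: the norm limit $g(t_0)$ is in particular a weak limit, and we already showed $x^*(S_m(t_0))\to x^*(f(t_0))$ for all $x^*$, so by uniqueness of weak limits $g(t_0)=f(t_0)$. This closes the argument for each $t_0>0$, which is exactly the asserted pointwise convergence. I should also note the cosmetic point that $c_n(f)=\int_0^\infty \ell_n^{(\alpha)}(t)f(t)\,dt = \frac{n!}{\Gamma(n+\alpha+1)}\int_0^\infty e^{-t}t^\alpha f(t)L_n^{(\alpha)}(t)\,dt$ by the definition of $\ell_n^{(\alpha)}$, so this is literally the scalar coefficient formula of Theorem \ref{upes} read in $X$; and that the Bochner integrability of $e^{-t}t^\alpha f(t)L_n^{(\alpha)}(t)$ on $(0,\infty)$ follows from the finiteness hypothesis together with Cauchy--Schwarz against the scalar weight (so $c_n(f)$ is well defined). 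The main obstacle is faithfully transcribing Uspensky's kernel/integration-by-parts estimate so that it is manifestly linear in $f$ and controlled by $\|f(t)\|$ alone — once that is in place, completeness of $X$ does the rest; there is no need for any a priori boundedness principle like uniform boundedness, since we get the Cauchy property directly.
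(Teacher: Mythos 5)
Your overall architecture is sound, and its core --- transcribing Uspensky's scalar argument into Bochner integrals --- is exactly what the paper does. But two points deserve comment. First, the weak-convergence/Cauchy scaffolding is unnecessary: Uspensky's argument, via the reproducing property $\int_0^\infty e^{-y}y^\alpha\varphi_m(t,y)\,dy=1$ of the Christoffel--Darboux kernel $\varphi_m(t,y)=\sum_{n=0}^m\frac{n!}{\Gamma(n+\alpha+1)}L_n^{(\alpha)}(t)L_n^{(\alpha)}(y)$, writes $S_m(f)(t)-f(t)=\int_0^\infty e^{-y}y^\alpha\varphi_m(t,y)(f(y)-f(t))\,dy$ as a single integral, so one estimates $\lVert S_m(f)(t)-f(t)\rVert$ directly; there is nothing to identify afterwards and no separate Cauchy step. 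Second, and more seriously, your description of what the transcription will yield --- a bound of the form $\varepsilon_{N,M}(t_0)$ times a norm of $f$, with $\varepsilon_{N,M}\to 0$, ``linear in $f$ and controlled by $\lVert f(t)\rVert$ alone'' --- is not an accurate picture of the key step and, taken literally, cannot be made to work. The outer ranges $(0,H)$ and $(G,\infty)$ are indeed handled by Cauchy--Schwarz against the scalar bound $\int y^\alpha e^{-y}\varphi_m^2(y,t)\,dy\le C$, and there norms of $f$ suffice. But on the middle range $(H,G)$ the decay comes from \emph{oscillation}: one inserts the asymptotic expansion of $\varphi_m$ and applies the Riemann--Lebesgue lemma to the vector-valued function $y\mapsto e^{-y/2}y^{\alpha/2-1/4}F(t,y)$ with $F(t,y)=(f(y)-f(t))/(y-t)$. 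This convergence is not uniform over $f$ in any weighted $L^2$ ball (were it uniform, the point-evaluation partial-sum operators would converge in operator norm, which is false), so no estimate of the advertised form $\varepsilon_{N,M}\cdot\lVert f\rVert$ exists.

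The good news is that this is a flaw in the description, not in the strategy: the Riemann--Lebesgue lemma holds for Bochner-integrable functions with values in an arbitrary Banach space (the paper cites \cite[Theorem 1.8.1 c)]{ABHN}), so the middle-range term still tends to $0$ in norm, just not via a linearity/norm argument. With that replacement --- Cauchy--Schwarz on the tails, vector-valued Riemann--Lebesgue in the middle --- your plan closes and coincides with the paper's proof. Your preliminary reduction via functionals is correct but only reproves the weak convergence that the paper itself notes, in Remark \ref{UMD}, as a straightforward consequence of the scalar theorem; it can be dropped entirely.
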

\begin{proof} By the Cauchy-Schwartz inequality, we get that  $c_n(f)\in X,$ $$\lVert c_n(f) \rVert\leq (\lVert \mathscr{L}_n^{(\alpha)} \rVert_2)^{\frac{1}{2}}\left(\frac{n!}{\Gamma(n+\alpha+1)}\displaystyle\int_{0}^{\infty}e^{-t}t^\alpha\lVert f(t) \rVert^2dt\right)^{\frac{1}{2}}<\infty, $$ where we have applied that $\{\mathscr{L}_n^{(\alpha)}(t)\}_{n\ge 0}$ is a orthonormal basis in $L^2(\R_+)$ and  $f$ satisfies the hypothesis.

Let $S_m(f)$ be the sum of the  first $m+1$ terms of the series, $$S_m(f)(t):=\sum_{n=0}^m c_n(f)L_n^{(\alpha)}(t)=\displaystyle\int_{0}^{\infty}e^{-y}y^{\alpha}\varphi_{m}(t,y)f(y)\,dy,\qquad t>0,$$ where $\varphi_m(t,y)=\displaystyle{\sum_{n=0}^m\frac{n!}{\Gamma(n+\alpha+1)}L_n^{(\alpha)}(t)L_n^{(\alpha)}(y),}$ for $t,y>0.$ Note that $\varphi_m(t,y)=\varphi_m(y,t)$, for $t,y >0$, $\displaystyle\int_{0}^{\infty}e^{-y}y^\alpha L_m^{(\alpha)}(t,y)\,dy=1$ for $m\ge 0$  and $$\varphi_m(t,y)={(m+1)!\over \Gamma(m+\alpha+1)}\left({L_{m+1}^{(\alpha)}(t)L_m^{(\alpha)}(y)-L_{m+1}^{(\alpha)}(y)L_m^{(\alpha)}(t)\over y-t}\right), \qquad t\not=y >0,$$ see these properties in \cite[p. 611]{Uspensky}. Now, we write \begin{eqnarray*}
&\quad&S_m(f)(t)-f(t)=\int_{0}^{\infty}e^{-y}y^\alpha\varphi_m(t,y)(f(y)-f(t))\,dy, \qquad t>0, \\
\end{eqnarray*}
 and we will conclude that $S_mf(t)-f(t)\to 0$ when $m \to \infty$ for $t>0$. Take $a<t<b$ and there exist $H,G$ such that $0<H<a<b<G$ and
 $$
 \int_0^H y^\alpha e^{-y}\varphi^2_m(y,t)dy <{C}, \qquad \int_G^\infty y^\alpha e^{-y}\varphi^2_m(y,t)dy <{C},
 $$
 for a constant $C>0$, see \cite[p. 614, formula (27)]{Uspensky}.

 Take $\varepsilon>0$. There exist $H,G>0$ in the above conditions such that
 \begin{eqnarray*}
&\quad&\Vert \int_{0}^{H}e^{-y}y^\alpha\varphi_m(t,y)(f(y)-f(t))\,dy\Vert \\&\quad& \qquad \le\left(\int_0^H y^\alpha e^{-y}\varphi^2_m(y,t)dy\right)^{1\over 2}\left(\int_0^H y^\alpha e^{-y}\Vert f(y)-f(t)\Vert^2dy\right)^{1\over 2}\le {\varepsilon\over 3}
\end{eqnarray*}
and similarly
\begin{eqnarray*}
&\quad&\Vert \int_{G}^{\infty}e^{-y}y^\alpha\varphi_m(t,y)(f(y)-f(t))\,dy\Vert \\&\quad& \qquad \le\left(\int_G^\infty y^\alpha e^{-y}\varphi^2_m(y,t)dy\right)^{1\over 2}\left(\int_G^\infty y^\alpha e^{-y}\Vert f(y)-f(t)\Vert^2dy\right)^{1\over 2}\le {\varepsilon\over 3}.
 \end{eqnarray*}
Note that
\begin{eqnarray*}
\varphi_m(t,y)={\sqrt{(m+1)(m+\alpha+1)}\over \pi m}{(ty)^{{-\alpha \over 2}-{1\over 4}}e^{t+y\over 2}\over t-x}\left(T_m(t,y)+ {U_m(t,y)\over \sqrt{m}}\right),
\end{eqnarray*}
where
\begin{eqnarray*}
T_m(t,y)&=& \sqrt{y}\cos\left(2\sqrt{mt}-{2\alpha+1\over 4}\pi\right)\sin\left(2\sqrt{my}-{2\alpha+1\over 4}\pi\right)\\
&\quad&-\sqrt{t}\cos\left(2\sqrt{my}-{2\alpha+1\over 4}\pi\right)\sin\left(2\sqrt{mt}-{2\alpha+1\over 4}\pi\right)
\end{eqnarray*}
and $\vert U_n(t,y)\vert \le M$ for  $y\in (H, G)$, see \cite[pp. 612-613]{Uspensky}. Now we define $$F(t,y)=\frac{f(y)-f(t)}{y-t}, \qquad t\not=y >0.$$ Observe that as function of $y,$  $F(t, \cdot)$ is continuous in $(0,\infty)$ for any $t>0$. Finally, we have that
\begin{eqnarray*}&\quad &\int_H^G e^{-y}y^\alpha\varphi_m(t,y)(f(y)-f(t))\,dy\\
&\quad&\qquad \qquad= C_m\, t^{{-\alpha \over 2}-{1\over 4}}e^{t\over 2}\int_H^G e^{-{y\over 2}}y^{{\alpha \over 2}-{1\over 4}}\left(T_m(t,y)+{U_m(t,y)\over \sqrt{m}}\right)F(t,y)\,dy,
\end{eqnarray*}
where $\displaystyle{\sup_{m\ge 1}C_m<\infty}$.  Note that if $m\to \infty$, then
 \begin{eqnarray*}\displaystyle{\int_H^G e^{-{y\over 2}}y^{{\alpha \over 2}-{1\over 4}}{U_m(t,y)\over \sqrt{m}}F(t,y)\,}dy\to 0,\qquad
\displaystyle{
\int_H^G e^{-{y\over 2}}y^{{\alpha \over 2}-{1\over 4}}T_m(t,y)F(t,y)\,dy} \to 0,
 \end{eqnarray*} where we have applied the Riemann-Lebesgue Lemma in the second limit, see for example \cite[Theorem 1.8.1 c)]{ABHN}.  We conclude that $$\displaystyle{\Vert \int_H^G e^{-y}y^\alpha\varphi_m(t,y)(f(y)-f(t))\,dy\Vert \le {\varepsilon \over 3}},$$ and $ \displaystyle\lim_{m\to\infty}\lVert S_m(f)(t)-f(t)\rVert=0,$ for $t>0$.\end{proof}

\begin{remark}\label{UMD} {\rm In fact,  the original theorem of  J. V. Uspensky is proved requiring less restrictive conditions on the function $f$, in particular, the existence of singular Dirichlet  integral, see \cite[p. 617]{Uspensky}.  In UMD Banach spaces,  the  convergence of the Laguerre expansion for  continuous functions might be proved following the original proof given in \cite{Uspensky}.

On the other hand, a straightforward application of Theorem \ref{upes} allows to obtain the weak convergence of the partial series $S_m(f)(t)$ to the function $f(t)$ for $t>0$.}
\end{remark}

\section{$C_0$-semigroups and resolvent operators given by Laguerre expansions}

\setcounter{theorem}{0}
\setcounter{equation}{0}

In this section, we are interested in representing $C_0$-semigroups and resolvent operators on series of Laguerre polynomials. To do this, we will apply several results included in  previous sections. However,  first of all,  we give some basic results from $C_0$-semigroup theory,   for further details see monographies \cite{ABHN, Nagel}.

Given $-A$ a closed operator  in a Banach space $X.$ The resolvent operator $\lambda\to (\lambda+A)^{-1}$ is analytic in the resolvent set $\rho(-A)$, and $$\frac{d^n}{d\lambda^n}(\lambda+A)^{-1}=(-1)^n n! (\lambda+A)^{-n-1}\ \text{for all }n\in\N,$$ see \cite[p.240]{Nagel}. We say that a $C_0$-semigroup $(T(t))_{t\geq 0}$ is uniformly bounded if $\lVert T(t)\rVert\leq M$ for all $t\geq 0,$ with $M$ a positive constant. Let $-A$ be the infinitesimal generator of a uniformly bounded $C_0$-semigroup $(T(t))_{t\geq 0}$. For $\alpha>0$ and $\lambda>0$ we define the fractional powers of the resolvent operator as below \begin{equation}\label{4}(\lambda+A)^{-\alpha}x:=\frac{1}{\Gamma(\alpha)}\int_0^{\infty}t^{\alpha-1}e^{-\lambda t}T(t)x\,dt,\qquad x\in X,\end{equation} see \cite[Proposition 11.1]{Komatsu}. The Cayley transform of $-A,$ i.e., $V:=(A-1)(A+1)^{-1},$  defines a bounded operator  called the cogenerator of the $C_0$-semigroup, see for example \cite{Eisner, Gomilko}. Note that \begin{equation}\label{coge}A^n(A+1)^{-n-\alpha-1}x=\biggl(\frac{V+1}{2}\biggr)^n\biggl(\frac{1-V}{2}\biggr)^{\alpha+1}x, \qquad x\in X.
\end{equation}
Nice identities between the powers of the cogenerator and integral expressions which involve generalized Laguerre polynomials appear in \cite[Theorem 1]{Gomilko}, \cite[Lemma 4.4]{BZ} and \cite[Lemma 6.7]{Besselin}.

\begin{theorem}\label{loc} Let $(T(t))_{t\geq 0}$ be a uniformly bounded $C_0$-semigroup in a Banach space $X$ with infinitesimal generator $(-A, D(A)).$
 \begin{itemize}

 \item[(i)]For $n\in \NN\cup\{0\}$ and $\alpha >-1$,
 $$
  \int_{0}^{\infty}\ell_n^{(\alpha)}(t)T(t)x\,dt=A^n(A+1)^{-n-\alpha-1}x, \qquad x\in X.
 %\frac{n!}{\Gamma(n+\alpha+1)}\int_{0}^{\infty}e^{-t}t^{\alpha}L^{\alpha}_n(t)T(t)x\,dt.
 $$

 \item[(ii)] For $n\in \NN\cup\{0\}$ and $\alpha>0$,
 $$
  \int_{0}^{\infty}\ell_n^{(\alpha)}(t)(t+A)^{-1}x\,dt=\int_{0}^{\infty}\varphi_{n, \alpha}(t)T(t)x\,dt, \qquad x\in X.
 %\frac{n!}{\Gamma(n+\alpha+1)}\int_{0}^{\infty}e^{-t}t^{\alpha}L^{\alpha}_n(t)T(t)x\,dt
 $$

 \item[(iii)] For $x\in D(A)$ and $\alpha>-1$, $$T(t)x=\displaystyle\sum_{n=0}^{\infty}A^n(A+1)^{-n-\alpha-1}x\,L^{(\alpha)}_n(t), \qquad t>0.$$

\item[(iv)]For $x\in D(A)$ and $\alpha>-1$, $$T(t)x=\biggl(\frac{1-V}{2}\biggr)^{\alpha+1}\displaystyle\sum_{n=0}^{\infty}\biggl(\frac{V+1}{2}\biggr)^nx\,L^{(\alpha)}_n(t), \qquad t>0.$$

     \item[(v)] For $x\in X$ and $\alpha >1$, $$(\lambda+A)^{-1}x=\displaystyle\sum_{n=0}^{\infty}\left(\int_{0}^{\infty}\varphi_{n, \alpha}(t)T(t)xdt\right)L^{(\alpha)}_n(\lambda), \qquad \lambda>0.$$
 \end{itemize}

\end{theorem}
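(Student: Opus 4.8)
The plan is to establish parts (i)–(v) in the order they are stated, using each one to feed the next, since the later claims are essentially specializations of Theorem~\ref{upesvect} combined with the integral identities of (i) and (ii).

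\textbf{Part (i).} First I would compute $\int_0^\infty \ell_n^{(\alpha)}(t) T(t) x\, dt$. By Proposition~\ref{alge}, $\ell_n^{(\alpha)} = (\delta_0 - e_1)^{\ast n}\ast \ell_0^{(\alpha)}$ with $\ell_0^{(\alpha)} = I_{\alpha+1}$, and $e_1^{\ast k} = I_k$. Expanding the convolution power and using $(\ref{4})$ (i.e.\ $(\lambda+A)^{-\beta}x = \frac{1}{\Gamma(\beta)}\int_0^\infty t^{\beta-1}e^{-\lambda t}T(t)x\,dt$ at $\lambda = 1$) together with $\int_0^\infty I_k(t) T(t) x\, dt = (1+A)^{-k}x$, one gets
\[
\int_0^\infty \ell_n^{(\alpha)}(t) T(t) x\, dt = \sum_{k=0}^n (-1)^k \binom{n}{k} (1+A)^{-k-\alpha-1}x = (1+A)^{-\alpha-1}\bigl(I - (1+A)^{-1}\bigr)^n x = A^n(A+1)^{-n-\alpha-1}x,
\]
where the last equality uses $I - (1+A)^{-1} = A(1+A)^{-1}$ and the fact that these bounded operators commute. (Care is needed to justify interchanging the operator-valued integral with the finite sum and to handle $\alpha \in (-1,0]$, where the formula $(\ref{4})$ needs to be read via the identity $\ell_n^{(\alpha)} = \frac{d}{dt}\ell_{n+1}^{(\alpha-1)}$ of Proposition~\ref{propLag}(vi) and integration by parts, reducing to a larger $\alpha$; this is the only delicate point in (i).)

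\textbf{Part (ii).} Here I would use the subordination formula $(\lambda+A)^{-1} = \int_0^\infty e^{-\lambda t} T(t)\, dt$ (valid for uniformly bounded semigroups), apply Fubini, and recognize $\int_0^\infty \ell_n^{(\alpha)}(s) e^{-ts}\, ds = \mathcal{L}(\ell_n^{(\alpha)})(t) = \varphi_{n,\alpha}(t)$ by $(\ref{LaplaceLagFunc})$; the integrability needed to apply Fubini follows from $\ell_n^{(\alpha)} \in L^1(\R_+)$ (Theorem~\ref{main2}(ii)) and $\varphi_{n,\alpha} \in L^1(\R_+)$ for $\alpha > 0$ (Remark~\ref{whittaker}).

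\textbf{Parts (iii)–(v).} For (iii), given $x \in D(A)$, the map $f(t) := T(t)x$ is differentiable on $(0,\infty)$ with $f'(t) = -AT(t)x = -T(t)Ax$ bounded, and $\int_0^\infty e^{-t}t^\alpha \|T(t)x\|^2\,dt < \infty$ by uniform boundedness; so Theorem~\ref{upesvect} applies and gives $T(t)x = \sum_{n=0}^\infty c_n(f) L_n^{(\alpha)}(t)$ pointwise, with $c_n(f) = \int_0^\infty \ell_n^{(\alpha)}(t) T(t)x\, dt = A^n(A+1)^{-n-\alpha-1}x$ by part~(i). Part (iv) is then immediate from $(\ref{coge})$. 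For (v), fix $\lambda > 0$ and $x \in X$; apply (iii) to the vector $(\lambda+A)^{-1}x \in D(A)$, then rewrite $A^n(A+1)^{-n-\alpha-1}(\lambda+A)^{-1}x = \int_0^\infty \ell_n^{(\alpha)}(t)(t+\lambda\cdot? )\cdots$—more precisely, use part~(i) on $(\lambda+A)^{-1}x$ and then swap the roles of the two resolvents via part~(ii): $\int_0^\infty \ell_n^{(\alpha)}(t) T(t)(\lambda+A)^{-1}x\,dt = \int_0^\infty \ell_n^{(\alpha)}(t)(\lambda+A)^{-1}T(t)x\,dt$, and the resolvent equation plus part~(ii) identifies this with $\int_0^\infty \varphi_{n,\alpha}(t) T(t) x\, dt$. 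The constraint $\alpha > 1$ arises precisely to guarantee $\varphi_{n,\alpha} \in L^1(\R_+)$ and convergence of the resulting series (using Theorem~\ref{phi}(i) to bound $\|\varphi_{n,\alpha}\|_1$ and $(\ref{mucklag})$ to bound $|L_n^{(\alpha)}(\lambda)|$).

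\textbf{Main obstacle.} The genuinely delicate step is part~(i) for $\alpha \in (-1, 0]$, where $\ell_0^{(\alpha)} = I_{\alpha+1}$ is not covered directly by the fractional-power formula $(\ref{4})$ (which needs $\alpha > 0$) and one must bootstrap from the case $\alpha > 0$ via the derivative identity of Proposition~\ref{propLag}(vi) and an integration-by-parts argument using the decay of $\ell_{n+1}^{(\alpha-1)}$ at $0$ and $\infty$. Everything else is bookkeeping: justifying interchanges of operator-valued integrals with sums and with other integrals (all absolutely convergent by the $L^1$-estimates of Section~2), and tracking the precise range of $\alpha$ needed for each series to converge.
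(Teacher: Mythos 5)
Your part (v) contains a genuine error. The identity to be proved is an expansion of the function $\lambda\mapsto(\lambda+A)^{-1}x$ in the polynomials $L_n^{(\alpha)}(\lambda)$, i.e.\ the Laguerre variable is the resolvent parameter $\lambda$, not the semigroup parameter $t$. Applying part (iii) to the vector $(\lambda+A)^{-1}x$ produces an expansion of $T(t)(\lambda+A)^{-1}x$ in $L_n^{(\alpha)}(t)$, which is a different statement, and the identification you then assert, $\int_0^\infty\ell_n^{(\alpha)}(t)\,T(t)(\lambda+A)^{-1}x\,dt=\int_0^\infty\varphi_{n,\alpha}(t)T(t)x\,dt$, is false: by part (i) the left-hand side equals $A^n(A+1)^{-n-\alpha-1}(\lambda+A)^{-1}x$, which depends on $\lambda$, while the right-hand side does not. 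The correct route (the paper's) is to apply Theorem \ref{upesvect} directly to the $X$-valued function $\lambda\mapsto(\lambda+A)^{-1}x$, which is differentiable on $(0,\infty)$; the hypothesis $\int_0^\infty e^{-\lambda}\lambda^\alpha\|(\lambda+A)^{-1}x\|^2\,d\lambda<\infty$ holds precisely when $\alpha>1$ because $\|(\lambda+A)^{-1}\|\le M/\lambda$ --- this, and not the integrability of $\varphi_{n,\alpha}$ (which only needs $\alpha>0$) or the convergence of the series, is where the restriction $\alpha>1$ comes from. The coefficients $c_n=\int_0^\infty\ell_n^{(\alpha)}(\lambda)(\lambda+A)^{-1}x\,d\lambda$ are then identified via part (ii).

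Parts (i)--(iv) are essentially correct, and in (i) and (ii) you take a genuinely different and arguably cleaner route than the paper: for (i) the paper uses the Rodrigues form $\ell_n^{(\alpha)}=\frac{1}{\Gamma(n+\alpha+1)}\frac{d^n}{dt^n}(e^{-t}t^{n+\alpha})$ and integrates by parts $n$ times (which requires some care about where $A^n$ lands), whereas your binomial expansion of $(\delta_0-e_1)^{*n}*I_{\alpha+1}$ via Proposition \ref{alge} reduces everything to the bounded operators $(1+A)^{-k-\alpha-1}$ and a finite sum; for (ii) the paper again integrates by parts and uses $\frac{d^n}{d\lambda^n}(\lambda+A)^{-1}=(-1)^nn!(\lambda+A)^{-n-1}$, while your direct Fubini argument with $\mathcal{L}(\ell_n^{(\alpha)})=\varphi_{n,\alpha}$ is shorter (just note that the absolute integrability needs the factor $t^\alpha$ with $\alpha>0$ to control $\int_0^\infty e^{-ts}\,ds=1/t$, not merely $\ell_n^{(\alpha)}\in L^1$). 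However, the ``main obstacle'' you identify in (i) for $\alpha\in(-1,0]$ is spurious: the restriction $\alpha>0$ in formula (\ref{4}) refers to the \emph{order} of the fractional power, and in your computation that order is $k+\alpha+1\ge\alpha+1>0$ for every $\alpha>-1$, so no bootstrapping via Proposition \ref{propLag}(vi) is needed. Parts (iii) and (iv) match the paper's argument.
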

\begin{proof} (i)  For $\alpha>-1 $ and $n\in \NN\cup\{0\}$, we have that

$$\int_{0}^{\infty}\ell^{(\alpha)}_n(t)T(t)x\,dt=\frac{1}{\Gamma(n+\alpha+1)}\int_{0}^{\infty}\frac{d^n}{dt^n}(e^{-t}t^{n+\alpha})T(t)x\,dt, \qquad x\in X.$$ We integrate by parts  to obtain that
 $$\int_{0}^{\infty}\ell^{(\alpha)}_n(t)T(t)x\,dt= A^n\frac{1}{\Gamma(n+\alpha+1)}\int_{0}^{\infty}e^{-t}t^{n+\alpha}T(t)x\,dt=A^n(A+1)^{-n-\alpha-1}x,
 $$
where we use the formula \eqref{4}.

%$$c_n(T(\cdot)x)A^n\frac{1}{\Gamma(n+\alpha+1)}\int_{0}^{\infty}e^{-t}t^{n+\alpha}T(t)x\,dt=A^n(A+1)^{-n-\alpha-1}x.$$
 (ii) Take $x\in X$ and $\alpha>0$. The integral  $\int_{0}^{\infty}\ell_n^{(\alpha)}(t)(t+A)^{-1}x\,dt$ converges  due to $\Vert (t+A)^{-1}\Vert \le {M\over t}$ for $t>0$.  Then
 \begin{eqnarray*}
 \int_{0}^{\infty}\ell_n^{(\alpha)}(t)(t+A)^{-1}x\,dt
&=&\frac{1}{\Gamma(n+\alpha+1)}\int_{0}^{\infty}\frac{d^n}{dt^n}(e^{-t}t^{n+\alpha})(t+A)^{-1}x\,dt \\
&=&\frac{1}{\Gamma(n+\alpha+1)}\int_{0}^{\infty}e^{-t}t^{n+\alpha}(\int_0^{\infty}s^ne^{-ts}T(s)x\,ds)\,dt \\
&=&\int_0^{\infty}\frac{s^n}{(s+1)^{n+\alpha+1}}T(s)x\,ds=\int_0^{\infty}\varphi_{n,\alpha}(s)T(s)x\,ds.
\end{eqnarray*}
(iii) For each $x\in D(A),$ the function $T(\cdot)x:\R_+\to X$ is differentiable at every point and ${d\over dt}T(t)x=-T(t)Ax$, see \cite[Definition 1.2, Chapter II]{Nagel}. In addition  note that  $$\int_{0}^{\infty}e^{-t}t^{\alpha}\lVert T(t)x\rVert^2\,dt\leq M^2\int_{0}^{\infty}e^{-t}t^{\alpha}\lVert x\rVert^2\,dt<\infty.$$ Then, we apply Theorem \ref{upesvect} to  have that $\lVert T(t)x-\displaystyle\sum_{n=0}^{m}c_n(T(\cdot)x)\,L^{(\alpha)}_n(t) \rVert\to 0$ as $m\to\infty,$ for all $t> 0,$ with $$c_n(T(\cdot)x)=\int_{0}^{\infty}\ell^{(\alpha)}_n(t)T(t)x\,dt=A^n(A+1)^{-n-\alpha-1}x,$$ where we have applied the part (i). (iv) The proof of (iv) is a straightforward consequence of (iii) and (\ref{coge}). (v) Note that $\displaystyle\int_0^{\infty}e^{-t}t^{\alpha}\lVert (t+A)^{-1}\rVert^2 dt\leq M^2\int_0^{\infty}\frac{e^{-t}}{t^{2-\alpha}}dt<\infty$ if only if $\alpha>1.$ In this case  $(\lambda+A)^{-1}x=\displaystyle\sum_{n=0}^{\infty}c_n((\cdot+A)^{-1}x)\,L^{(\alpha)}_n(\lambda)$ for $x\in X$ with $$
c_n((\cdot+A)^{-1}x)= \int_{0}^{\infty}\ell_n^{(\alpha)}(t)(t+A)^{-1}x\,dt=\int_{0}^{\infty}\varphi_{n, \alpha}(t)T(t)x\,dt,
$$
where we have applied the part (ii).
\end{proof}

For $C_0$-semigroups which are not uniformly bounded, we may perturb the infinitesimal generator to obtain uniformly bounded $C_0$-semigroups, and in this way, $-A-w$  generates a uniformly bounded $C_0$-semigroup, see for example \cite[p.60]{Nagel}.

\begin{corollary} Let $(T(t))_{t\geq 0}$ be a $C_0$-semigroup in a Banach space $X$ with infinitesimal generator $(-A, D(A))$ and exponential bound $w,$ that is, $\lVert T(t)\rVert\leq Me^{wt}.$ Then for \mbox{$x\in D(A)$} and $\alpha>-1$, $$T(t)x=e^{wt}\displaystyle\sum_{n=0}^{\infty}(A+w)^n(A+w+1)^{-n-\alpha-1}x\,L^{(\alpha)}_n(t), \qquad t>0.$$
\end{corollary}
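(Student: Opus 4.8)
The plan is to reduce the Corollary to Theorem \ref{loc} (iii) by the standard rescaling trick for $C_0$-semigroups. First I would set $S(t):=e^{-wt}T(t)$ for $t\ge 0$ and observe that $(S(t))_{t\ge 0}$ is again a $C_0$-semigroup, now \emph{uniformly bounded} since $\lVert S(t)\rVert = e^{-wt}\lVert T(t)\rVert \le M$ for all $t\ge 0$. Its infinitesimal generator is $-(A+w)$ with the same domain $D(A)$; this is the classical bounded-perturbation/rescaling fact recalled just before the statement (see \cite[p.60]{Nagel}), and $\rho(-(A+w))$ contains $(0,\infty)$ with $(\lambda+A+w)^{-1}$ well defined, so all quantities appearing below make sense.

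Next I would simply apply Theorem \ref{loc} (iii) to the uniformly bounded semigroup $(S(t))_{t\ge 0}$ with generator $-(A+w)$: for each $x\in D(A)=D(A+w)$ and $\alpha>-1$,
\begin{equation*}
S(t)x=\sum_{n=0}^{\infty}(A+w)^n(A+w+1)^{-n-\alpha-1}x\,L^{(\alpha)}_n(t),\qquad t>0,
\end{equation*}
with convergence pointwise in $X$ for every $t>0$. Multiplying both sides by $e^{wt}$ and using $T(t)x=e^{wt}S(t)x$ gives exactly the asserted identity
\begin{equation*}
T(t)x=e^{wt}\sum_{n=0}^{\infty}(A+w)^n(A+w+1)^{-n-\alpha-1}x\,L^{(\alpha)}_n(t),\qquad t>0,
\end{equation*}
since multiplication by the scalar $e^{wt}$ is continuous on $X$ and hence commutes with the (norm-convergent) sum over $n$.

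There is essentially no obstacle here; the only point that needs a word of care is checking that the generator of $S$ is indeed $-(A+w)$ with unchanged domain, so that "$x\in D(A)$" is the right hypothesis and the operators $(A+w)^n(A+w+1)^{-n-\alpha-1}$ are the correct coefficients — but this is immediate from the definition of the infinitesimal generator as a limit of difference quotients, $-(A+w)x=\lim_{t\to0^+}\frac1t(S(t)x-x)=\lim_{t\to0^+}\frac1t(e^{-wt}T(t)x-x)$, combined with $-Ax=\lim_{t\to0^+}\frac1t(T(t)x-x)$ for $x\in D(A)$. One may add the remark that the same rescaling applied to parts (i), (ii), (iv), (v) of Theorem \ref{loc} yields the analogous resolvent and cogenerator expansions for non-uniformly-bounded semigroups, the resolvent statements now holding for $\lambda$ in a right half-line determined by $w$.
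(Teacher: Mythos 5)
Your proof is correct and is exactly the argument the paper intends: the sentence preceding the corollary points to the rescaling $S(t)=e^{-wt}T(t)$, which is uniformly bounded with generator $-(A+w)$, and the corollary then follows by applying Theorem \ref{loc} (iii) to $S$ and multiplying by $e^{wt}$. No gaps; your added care about the domain $D(A+w)=D(A)$ is the only point worth checking and you handle it correctly.
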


Approximation theory of $C_0$-semigroups has a great importance in many mathematical areas, as PDEs, mathematical physics and probability
theory. There are a large number of results about approximation of $C_0$-semigroups using different approximations. A nice summary of these different approximations may be found in the recent paper \cite{Gomilko}. Subdiagonal Pad\'{e} approximations to  $C_0$-semigroups are given  in \cite{BT, Egert, Neubrander}.

Now we are interested in the approximation order of the $C_0$-semigroup by the $m-$th partial sum of the Laguerre series, where, $$T_{m,\alpha}(t)x:=\displaystyle\sum_{n=0}^{m}A^n(A+1)^{-n-\alpha-1}x\,L^{(\alpha)}_n(t), \qquad x\in X, \quad t> 0.$$ Our  approximation series are computationally better than the rational approximations (for example Euler approximation or Pad\'{e} approximation) since in each step  one has to start to calculate  all approximations over again  (\mbox{$(1+t\frac{A}{m})^{-m}x,$ in the case of Euler approximation)} while for the Laguerre series one have already calculated until \mbox{$A^{m-1}(A+1)^{-(m-1)-\alpha-1}x,$} see \cite{Grimm}.

% and $0<\beta\leq 2$, then for $x\in D(A^{\beta}),$ $t>0$ and $n\in\N$ the approximation order of Yosida, Dunford-Segal and Euler's approximation $T_n(t)$ is the following
%$$\lVert T(t)x-T_n(t)x \rVert\leq \frac{ C_{t,\beta} \lVert A^{\beta} x\rVert}{m^{\frac{\beta}{2}}},$$ and it show that the convergence is faster for dates smoother, but with the restriction $x\in D(A^{\beta})$ with $\beta\in (0,2].$ In these approximations, Bernstein functions and Hille-Phillips functional calculus have a important role.

\begin{theorem}\label{rate} Let $(T(t))_{t\geq 0}$ be a uniformly bounded $C_0$-semigroup in a Banach space $X,$ $\lVert T(t)\rVert\leq M,$ with infinitesimal generator $-A,$ $\alpha>-1$.

 \begin{itemize}

\item[(i)] For $x\in D(A^p)$ and $n\geq p,$ $$\lVert A^n(A+1)^{-n-\alpha-1}x\rVert\leq \frac{C} {n^{\frac{\alpha+p}{2}}}\lVert A^p x \rVert,$$ with $C$ a positive constant depending on $\alpha,$ $p$ and $M.$

    \item[(ii)] For each $t>0$ there is a $m_0\in\N$ such that for all integer $2<p\leq m+1$ with $m\geq m_0,$  $$\lVert T(t)x-T_{m,\alpha}(t)x \rVert\leq \frac{ C_{t,p} \lVert A^p x\rVert}{m^{\frac{p}{2}-1}} \qquad x\in D(A^p),$$ where $C_{t,p}$ is a constant which depends only on $t>0$ and $p$.

 \end{itemize}

\end{theorem}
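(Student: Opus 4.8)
The plan is to prove the two parts separately, using part (i) as the engine for part (ii). For part (i), I would start from the cogenerator identity \eqref{coge}, which gives
$$A^n(A+1)^{-n-\alpha-1}x=\Bigl(\tfrac{V+1}{2}\Bigr)^n\Bigl(\tfrac{1-V}{2}\Bigr)^{\alpha+1}x,$$
but more usefully I would work directly with the integral representation from Theorem \ref{loc} (i): for $x\in D(A^p)$ and $n\ge p$, write $A^n(A+1)^{-n-\alpha-1}x = A^{n-p}(A+1)^{-(n-p)-(\alpha+p)-1}(A+1)^{-p}\,A^p x$. Wait --- more cleanly, since $A$ commutes with its resolvent, $A^n(A+1)^{-n-\alpha-1}x = \bigl[A^{n-p}(A+1)^{-(n-p)-(\alpha+p)-1}\bigr]\,\bigl[(A+1)^{-p}A^p x\bigr]$, but $(A+1)^{-p}A^p x$ is not obviously the cleanest object. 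Instead I would apply Theorem \ref{loc} (i) with $\alpha$ replaced by $\alpha+p$ and $n$ replaced by $n-p$ to the vector $A^p x$: this yields $A^{n-p}(A+1)^{-(n-p)-(\alpha+p)-1}A^p x=\int_0^\infty \ell_{n-p}^{(\alpha+p)}(t)\,T(t)A^p x\,dt$, and note $A^n(A+1)^{-n-\alpha-1}x=A^{n-p}(A+1)^{-(n-p)-(\alpha+p)-1}A^px$. Then, by the uniform bound $\lVert T(t)\rVert\le M$ and Theorem \ref{main2} (ii),
$$\lVert A^n(A+1)^{-n-\alpha-1}x\rVert\le M\,\lVert \ell_{n-p}^{(\alpha+p)}\rVert_1\,\lVert A^p x\rVert\le \frac{M\,C_{\alpha+p}}{(n-p)^{(\alpha+p)/2}}\,\lVert A^p x\rVert,$$
and since for $n\ge 2p$ one has $(n-p)^{(\alpha+p)/2}\ge (n/2)^{(\alpha+p)/2}$, absorbing constants gives the stated bound with a constant $C=C(\alpha,p,M)$; the finitely many $p\le n<2p$ are handled by adjusting $C$. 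One subtlety: Theorem \ref{main2} (ii) requires $\alpha+p>-1$, which holds since $\alpha>-1$ and $p\ge 1$ (and for part (ii) we have $p>2$).

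For part (ii), the idea is to bound the tail of the Laguerre series. Using Theorem \ref{loc} (iii), $T(t)x-T_{m,\alpha}(t)x=\sum_{n=m+1}^\infty A^n(A+1)^{-n-\alpha-1}x\,L_n^{(\alpha)}(t)$, so that
$$\lVert T(t)x-T_{m,\alpha}(t)x\rVert\le \sum_{n=m+1}^\infty \lVert A^n(A+1)^{-n-\alpha-1}x\rVert\,\lvert L_n^{(\alpha)}(t)\rvert.$$
For fixed $t>0$, the pointwise bound \eqref{mucklag} gives a constant $C_t$ and an index $n_0$ with $\lvert L_n^{(\alpha)}(t)\rvert\le C_t\,n^{\alpha/2}$ for $n\ge n_0$; choosing $m_0\ge n_0$ ensures all terms in the tail satisfy this. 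Combining with part (i) (valid since $n\ge m+1\ge p$),
$$\lVert T(t)x-T_{m,\alpha}(t)x\rVert\le \sum_{n=m+1}^\infty \frac{C}{n^{(\alpha+p)/2}}\lVert A^p x\rVert\cdot C_t\,n^{\alpha/2}= C\,C_t\,\lVert A^p x\rVert\sum_{n=m+1}^\infty n^{-p/2}.$$
Since $p>2$, the series $\sum n^{-p/2}$ converges, and its tail is estimated by $\sum_{n=m+1}^\infty n^{-p/2}\le \int_m^\infty s^{-p/2}\,ds=\frac{2}{p-2}\,m^{-(p/2-1)}$. This yields $\lVert T(t)x-T_{m,\alpha}(t)x\rVert\le C_{t,p}\,\lVert A^p x\rVert\,m^{-(p/2-1)}$ with $C_{t,p}$ depending only on $t$ and $p$ (the dependence on $M$ and $\alpha$ being folded into the constant, which is legitimate since $\alpha$ is fixed throughout the statement).

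The main obstacle is organizational rather than deep: one must be careful that the pointwise Laguerre bound \eqref{mucklag} is only asymptotic (hence the need for $m_0$ depending on $t$), and that part (i) requires the shifted parameter $\alpha+p$ to stay in the admissible range $>-1$, which is why the hypothesis $p>2$ (in particular $p\ge 1$) is comfortably enough. One should also double-check the interchange of summation implicit in passing from the series identity of Theorem \ref{loc} (iii) to the tail estimate --- this is justified because the tail is absolutely convergent by exactly the bound just derived, so the partial sums converge and the limit is $T(t)x$. Everything else is a routine convergence-of-$p$-series computation.
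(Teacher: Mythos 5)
Your proposal is correct and follows essentially the same route as the paper: both parts rest on the representation $A^n(A+1)^{-n-\alpha-1}x=\int_0^\infty \ell_{n-p}^{(\alpha+p)}(t)\,T(t)A^px\,dt$, the $L^1$-estimate of Theorem \ref{main2} (ii), the Muckenhoupt bound (\ref{mucklag}), and the tail of the $\sum n^{-p/2}$ series. The only cosmetic difference is that you reach the integral representation through the algebraic identity $A^n(A+1)^{-n-\alpha-1}=A^{n-p}(A+1)^{-(n-p)-(\alpha+p)-1}A^p$ on $D(A^p)$, whereas the paper integrates by parts using Proposition \ref{propLag} (vi); you are also slightly more careful than the paper in passing from $(n-p)^{-(\alpha+p)/2}$ to $n^{-(\alpha+p)/2}$.
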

\begin{proof}
(i) We write $B(x):=A^n(A+1)^{-n-\alpha-1}x=\int_0^{\infty}\ell_n^{(\alpha)}(t)T(t)x\,dt$ for $x\in X,$ see Theorem \ref{loc} (i). We apply Proposition \ref{propLag} (vi) and integrate it by parts to obtain that $$B(x)=\int_0^{\infty}\frac{d^p}{dt^p}(\ell_{n-p}^{(\alpha+p)}(t))T(t)x\,dt=\int_0^{\infty}\ell_{n-p}^{(\alpha+p)}(t)A^pT(t)x\,dt \qquad x\in D(A^p),$$
where we have used that $\frac{d^{p-j}}{dt^{p-j}}(\ell_{n-p}^{(\alpha+p)}(t))_{t=0}=0,$ and $\frac{d^{p-j}}{dt^{p-j}}(\ell_{n-p}^{(\alpha+p)}(t))_{t=\infty}=0,$ for $j=1,2,\ldots,p.$ By Theorem \ref{main2} (ii), $$\lVert B(x) \rVert\leq \frac{C} {n^{\frac{\alpha+p}{2}}}\displaystyle\sup_{ t\ge 0}\lVert A^p T(t)x \rVert < \frac{C} {n^{\frac{\alpha+p}{2}}}\lVert A^p x \rVert,$$  and we conclude the result.

(ii) By Muckenhoupt estimates,
 we know that there is a $m_0\in\N$ such that  $|L_m^{(\alpha)}(t)|\leq C_t m^{\frac{\alpha}{2}},$ for all $m\geq m_0,$ see  (\ref{mucklag}). So, for $m\geq m_0$ and $x\in D(A^p)$ with $p\leq m+1,$ we apply Theorem \ref{loc} (iii) and (i) to get that  $$
\lVert T(t)x-T_{m,\alpha}(t)x\rVert\leq\displaystyle\sum_{n=m+1}^{\infty}\lVert A^n(A+1)^{-n-\alpha-1}x \rVert |L_n^{(\alpha)}(t)|\leq\displaystyle\sum_{n=m+1}^{\infty}\frac{C_t}{n^{\frac{p}{2}}}\lVert A^p x\rVert\leq\frac{ C_{t,p} \lVert A^p x\rVert}{m^{\frac{p}{2}-1}},$$
%&\leq & \displaystyle\sum_{n=m+1}^{\infty}C_t n^{\frac{-\alpha-p}{2}}n^{\frac{\alpha}{2}}\lVert A^p x\rVert=
%&\leq & C_t \lVert A^p x\rVert\displaystyle\sum_{n=m+1}^{\infty}\int_{n-1}^n\frac {dx}{x^{\frac{p}{2}}}=
where we have used the bound of part (i).
\end{proof}

The following result  gives  some representation formulae (via series of operators) of fractional powers of resolvent operator of $-A$. The Hille functional calculus is a linear and bounded map $f\mapsto f(A)$, $L^1(\R_+) \to {\mathcal B}(X)$, where
$$
f(A)x:=\int_0^\infty f(t)T(t)xdt, \qquad x\in X, \quad f\in L^1(\R_+),$$
and $(T(t))_{t\geq 0}$ is a uniformly bounded semigroup generated by $(-A, D(A))$ (see for example \cite[Chapter 3]{Sinclair}). Note that if $f= \sum_{n=1}^\infty f_n$ in $L^1(\R_+)$ then  $f(A)= \sum_{n=1}^\infty f_n(A)$ in ${\mathcal B}(X)$.

\begin{theorem}\label{appr} Let $(-A, D(A))$ be the infinitesimal generator of a uniformly bounded $C_0$-semigroup $(T(t))_{t\geq 0}$ in a Banach space $X.$
\begin{itemize}
\item[(i)] For  $2\beta>\alpha>-1,$ and $x\in X$,
$$(A+1)^{-\beta-1}=\displaystyle\sum_{n=0}^{\infty}\binom{\alpha-\beta+m-1}{m}A^n(A+1)^{-n-\alpha-1} \hbox{ in }{\mathcal B}(X).$$ In particular for $\alpha>2,$ we have that
$(A+1)^{-\alpha}=\displaystyle\sum_{n=0}^{\infty}A^n(A+1)^{-n-\alpha-1} \hbox{ in }{\mathcal B}(X).$

\item[(ii)] For all $a>0$, $(a+A)^{-1}=\displaystyle\sum_{n=0}^{\infty}\frac{(a-\frac{1}{2})^n}{(a+\frac{1}{2})^{n+1}}(A-\frac{1}{2})^n(A+\frac{1}{2})^{-n-1}$ holds in ${\mathcal B}(X).$

\end{itemize}
\end{theorem}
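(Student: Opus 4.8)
The plan is to transport the two $L^1(\R_+)$-expansions of Theorem \ref{fract} into $\mathcal{B}(X)$ through the Hille functional calculus $f\mapsto f(A)=\int_0^\infty f(t)T(t)\,dt$. Since this map is linear and bounded from $L^1(\R_+)$ into $\mathcal{B}(X)$ and, as recalled just before the statement, carries an $L^1$-convergent series $f=\sum_n f_n$ to the operator series $f(A)=\sum_n f_n(A)$ in $\mathcal{B}(X)$, it suffices to identify the images of the building blocks $I_{\beta+1}$, $e_a$, $\ell_n^{(\alpha)}$ and $\mathscr{L}_n^{(0)}$ under this calculus.

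For (i), start from $I_{\beta+1}=\sum_{n=0}^\infty\binom{\alpha-\beta+n-1}{n}\ell_n^{(\alpha)}$ in $L^1(\R_+)$, valid for $2\beta>\alpha>-1$ by Theorem \ref{fract}(i). Applying the functional calculus, each term becomes $\ell_n^{(\alpha)}(A)=\int_0^\infty\ell_n^{(\alpha)}(t)T(t)\,dt=A^n(A+1)^{-n-\alpha-1}$ by Theorem \ref{loc}(i), while the left-hand side becomes $I_{\beta+1}(A)x=\frac{1}{\Gamma(\beta+1)}\int_0^\infty t^{\beta}e^{-t}T(t)x\,dt=(A+1)^{-\beta-1}x$ by \eqref{4} with $\lambda=1$ and exponent $\beta+1$. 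This gives the stated identity in $\mathcal{B}(X)$; the case $\alpha>2$ follows by taking $\beta=\alpha-1$, for which $\binom{\alpha-\beta+n-1}{n}=\binom{n}{n}=1$, $I_{\beta+1}=I_\alpha$, $(A+1)^{-\beta-1}=(A+1)^{-\alpha}$, and $2\beta>\alpha$.

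For (ii), start from $e_a=\sum_{n=0}^\infty\frac{(a-1/2)^n}{(a+1/2)^{n+1}}\mathscr{L}_n^{(0)}$ in $L^1(\R_+)$, $a>0$, by Theorem \ref{fract}(ii). Applying the functional calculus gives $e_a(A)x=\int_0^\infty e^{-at}T(t)x\,dt=(a+A)^{-1}x$ on the left. For the terms, use $\mathscr{L}_n^{(0)}(t)=e^{-t/2}L_n^{(0)}(t)=\sum_{k=0}^n(-1)^k\binom{n}{k}\frac{t^k}{k!}e^{-t/2}$; integrating this finite sum against $T(t)x$ and applying \eqref{4} with $\lambda=1/2$ and exponent $k+1$ yields
$$\mathscr{L}_n^{(0)}(A)x=\sum_{k=0}^n(-1)^k\binom{n}{k}\Bigl(A+\tfrac12\Bigr)^{-k-1}x=\Bigl(A-\tfrac12\Bigr)^n\Bigl(A+\tfrac12\Bigr)^{-n-1}x,$$
the last equality being the binomial expansion $(A-\tfrac12)^n=\bigl((A+\tfrac12)-1\bigr)^n$ multiplied by $(A+\tfrac12)^{-n-1}$. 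Substituting back produces the claimed series in $\mathcal{B}(X)$.

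Both halves are essentially bookkeeping: convergence of the operator series is already supplied by Theorem \ref{fract} together with the continuity of the Hille calculus, the interchange of the finite sum with the integral in (ii) is immediate, and the only point needing a word is that $(a+A)^{-1}$ and $(A+\tfrac12)^{-1}$ are genuine bounded operators—true because $-A$ generates a uniformly bounded $C_0$-semigroup, so the open right half-plane lies in $\rho(-A)$ with resolvents given by Laplace transforms of $T(\cdot)$. I do not anticipate a real obstacle here.
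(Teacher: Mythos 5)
Your proof is correct and follows the paper's strategy for both parts: transport the $L^1(\R_+)$ expansions of Theorem \ref{fract} into $\mathcal{B}(X)$ via the Hille calculus, identifying $\ell_n^{(\alpha)}(A)=A^n(A+1)^{-n-\alpha-1}$ through Theorem \ref{loc}(i) and $I_{\beta+1}(A)=(A+1)^{-\beta-1}$ through \eqref{4}. Part (i) is essentially identical to the paper's argument. In part (ii) you diverge locally: the paper writes $\int_0^\infty e^{-t/2}L_n^{(0)}(t)T(t)x\,dt=\int_0^\infty \ell_n^{(0)}(t)S(t)x\,dt$ with the rescaled semigroup $S(t)=e^{t/2}T(t)$ generated by $\frac12-A$, and then invokes Theorem \ref{loc}(i) for $S$ --- which is slightly delicate since $S$ is only bounded by $Me^{t/2}$, not uniformly bounded, so the hypothesis of that theorem is not literally met (though the integral still converges because $\ell_n^{(0)}(t)e^{t/2}$ decays like $e^{-t/2}$ times a polynomial). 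Your alternative --- expanding $L_n^{(0)}$ as a finite sum, applying \eqref{4} with $\lambda=\tfrac12$ termwise, and recombining via the binomial identity $(A-\tfrac12)^n(A+\tfrac12)^{-n-1}=\sum_{k=0}^n(-1)^k\binom{n}{k}(A+\tfrac12)^{-k-1}$ --- stays entirely within the uniformly bounded setting and so avoids that wrinkle, at the cost of a small extra algebraic verification.
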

\begin{proof}
(i) By Theorem \ref{fract} (i) and Theorem \ref{loc} (i), we have that  \begin{eqnarray*}
(A+1)^{-\beta-1}x&=&\frac{1}{\Gamma(\beta+1)}\int_0^{\infty}t^{\beta}e^{-t}T(t)x\,dt=\displaystyle\sum_{n=0}^{\infty}\binom{\alpha-\beta+n-1}{n}\int_0^{\infty}\ell_n^{(\alpha)}(t)T(t)x\,dt \\
&=&\displaystyle\sum_{n=0}^{\infty}\binom{\alpha-\beta+n-1}{n}A^n(A+1)^{-n-\alpha-1}x.
\end{eqnarray*}

(ii) For $a>0$ and $x\in X,$ we have by  Theorem \ref{fract} (ii),\begin{eqnarray*} (a+A)^{-1}x&=&\int_0^{\infty}e^{-at}T(t)x\,dt=\displaystyle\sum_{n=0}^{\infty}\frac{(a-\frac{1}{2})^n}{(a+\frac{1}{2})^{n+1}}\int_0^{\infty}e^{-\frac{t}{2}}L_n^{(0)}(t)T(t)x\,dt \\
&=&\displaystyle\sum_{n=0}^{\infty}\frac{(a-\frac{1}{2})^n}{(a+\frac{1}{2})^{n+1}}\int_0^{\infty}\ell_n^{(0)}(t)S(t)x\,dt,
\end{eqnarray*}
with $S(t):=e^{\frac{t}{2}}T(t)$is the $C_0$-semigroup generated by $\frac{1}{2}-A.$ By Theorem \ref{fract} (i), we conclude that  $ \displaystyle{(a+A)^{-1}x=\displaystyle\sum_{n=0}^{\infty}\frac{(a-\frac{1}{2})^n}{(a+\frac{1}{2})^{n+1}}(A-\frac{1}{2})^n(A+\frac{1}{2})^{-n-1}x},$ for $x\in X.$
\end{proof}

%\noindent\textbf{Note:} Following the same procedure as in the previous proposition, considering \mbox{$\beta=\alpha-1,$} \begin{equation}\label{8}(A+1)^{-\alpha}x=\displaystyle\sum_{n=0}^{\infty}A^n(A+1)^{-n-\alpha-1}x\end{equation} for $\alpha>2.$

%\begin{proposition} Let $(T(t))_{t\geq 0}$ a uniformly bounded $C_0$-semigroup in a Banach space $X$ with infinitesimal generator $-A.$
%\end{proposition}
%\begin{proof}
%We consider the algebra homomorphism $\Theta_+:L^1(\R^+)\to\mathcal{B}(X).$ Then for $a>0$ we write $e_a(t)=e^{-at}$ with $t\geq 0,$ so for $x\in X$ by \eqref{1},\begin{eqnarray*} (a+A)^{-1}x&=&\Theta_+(e_{a})x=\displaystyle\sum_{n=0}^{\infty}\frac{(a-\frac{1}{2})^n}{(a+\frac{1}{2})^{n+1}}\int_0^{\infty}e^{-\frac{t}{2}}L_n^{(0)}(t)T(t)x\,dt \\
%&=&\displaystyle\sum_{n=0}^{\infty}\frac{(a-\frac{1}{2})^n}{(a+\frac{1}{2})^{n+1}}\int_0^{\infty}e^{-t}L_n^{(0)}(t)S(t)x\,dt,
%\end{eqnarray*}
%with $S(t)=e^{\frac{t}{2}}T(t)$ semigroup generated by $\frac{1}{2}-A.$ Observe that \begin{eqnarray*} \int_0^{\infty}e^{-t}L_n^{(0)}(t)S(t)x\,dt&=&\frac{1}{n!}\int_{0}^{\infty}\frac{d^n}{dt^n}(e^{-t}t^{n})S(t)x\,dt \\
%&=&(A-\frac{1}{2})^n\frac{1}{n!}\int_{0}^{\infty}e^{-t}t^{n}S(t)x\,dt \\
%&=&(A-\frac{1}{2})^n(A+\frac{1}{2})^{-n-1}x,
%\end{eqnarray*} then $$(a+A)^{-1}x=\displaystyle\sum_{n=0}^{\infty}\frac{(a-\frac{1}{2})^n}{(a+\frac{1}{2})^{n+1}}(A-\frac{1}{2})^n(A+\frac{1}{2})^{-n-1}x.$$
%\end{proof}

\section{Examples and final comments}

In this section we apply our results to concrete examples: translation, convolution  and multiplication semigroups in Lebesgue space. Holomorphic semigroups allow us to improve some previous results, compare Theorem \ref{Holomorphic} and Theorem \ref{loc}.

\setcounter{theorem}{0}
\setcounter{equation}{0}

\subsection{Translation semigroup}
Let $L^p(\R_+)$  be with $1\leq p<\infty.$ The translation semigroup (or shift semigroup) in $L^p(\R_+)$, $(T(t))_{t\geq 0},$ defined by \begin{displaymath}
T(t)f(x):=(\delta_t\ast f)(x)=\left\{\begin{array}{ll}
f(x-t),&x>t,\\
0,&x\leq t,
\end{array} \right.
\end{displaymath} is an isometry $C_0$-semigroup. The infinitesimal generator $-A$ is the usual derivation operator, $-A=-\frac{d}{dx}.$ Furthermore, $
(A+1)^{-n-\alpha-1}f=I_{n+\alpha +1}*f,$ and
$A^n(A+1)^{-n-\alpha-1}f=\ell^{(\alpha)}_n*f,$
for $n\in\NN\cup\{0\}$ and $\alpha >-1$. By Theorem \ref{loc} (iii), we obtain the formula
$$
\delta_t\ast f=\displaystyle\sum_{n=0}^{\infty}(\ell_n^{(\alpha)}\ast f)L_n^{(\alpha)}(t), \qquad f\in W^{(1),p}(\R_+),
$$
where $W^{(1),p}(\R_+)$ is the Sobolev space defined by \mbox{$W^{(1),p}(\R_+)=\{f\in L^p(\R_+)\,\,|\,\,f'\in L^p(\R_+)\}$.} In particular for $\alpha=0$ this formula has been considered in \cite[Section 3, Examples 3.1(4)]{Du} where Laguerre expansions of tempered distributions are studied.

\subsection{Convolution and multiplication semigroups}
Let $L^p(\R^m)$  with $m\geq 1$ and $1\leq p<\infty.$ For $t>0$, let \mbox{$k_t:\R^m\to\R$} be a convolution kernel on $L^p(\R^m)$ such that  $\displaystyle\sup_{t>0}\Vert k_t\Vert_1<\infty$ and $(T(t))_{t\geq 0},$ defined by $$T(t)f(s):=(k_t*f)(s)= \int_{\R^m}k_t(s-u)f(u)du, \qquad f\in L^p(\R^m),$$ is a uniformly bounded $C_0$-semigroup, whose generator is denoted by $-A$.

Fixed $s\in\R^m,$ we suppose that the map  $t \mapsto k_t(s)$ is differentiable with respect to $t$ and
\begin{equation}\label{formu}\int_{0}^{\infty}e^{-t}t^{\alpha}|k_t(s)|^2\,dt<\infty.
 \end{equation}

  By  Theorem \ref{upes}, we have that $k_t(s)=\displaystyle\sum_{n=0}^{\infty}a_n(s)L_n^{(\alpha)}(t)$ for $s\in \R^m,$ and $t>0,$ where $$a_n(s)=\int_{0}^{\infty}\ell_n^{(\alpha)}(t)k_t(s)\,dt, \qquad s\in \R^m.$$ Furthermore, $a_n\in L^1(\R^n)$  and
 $\Vert a_n\Vert_1\leq \displaystyle{\sup_{t>0}}\Vert k_t\Vert_1\, \Vert \ell_n^{(\alpha)}\Vert_1.$
   By Fubini's Theorem and Theorem \ref{loc} (i), we obtain that $$A^n(A+1)^{-n-\alpha-1}f=a_n*f, \qquad f\in L^p(\R^m),$$ and therefore by Theorem \ref{loc} (iii), we have $\displaystyle{k_t*f=\displaystyle\sum_{n=0}^{\infty}(a_n\ast f)L_n^{(\alpha)}(t)}$ for $f\in D(A).$

Examples  of convolutions semigroups are the Gaussian and Poisson kernels, $$g_t(s):=\frac{1}{(4\pi t)^{\frac{m}{2}}}e^{-\frac{\lVert s\rVert^2}{4t}}, \qquad p_t(s):=\frac{\Gamma(\frac{m+1}{2})}{\pi^{\frac{m+1}{2}}}\frac{t}{(t^2+\lVert s\rVert^2)^{\frac{m+1}{2}}}$$ see \cite[p.25]{Sinclair}. The condition (\ref{formu}) is verified by Gaussian and Poisson kernels if $s\neq 0$ for $\alpha>-1;$ in the case if $s=0,$ it is needed to consider $\alpha>m-1$ for Gaussian kernel and $\alpha>2m-1$ for Poisson kernel.

Now, let $q:\R^m\to\R_-$ be a continuous function, where $\R_{-}:=(-\infty,0]$. The family of operators in $C_0(\R^m)$, $(S_q(t))_{t\geq 0},$ defined by $S_q(t)f:=e^{tq}f,$ for $t>0,$ is a contraction $C_0$-semigroup whose infinitesimal generator $(-B, D(B))$ is given by  $-Bf:=qf$ and $\ D(B)=\{f\in C_0(\R^m)\,\,|\,\, qf\in C_0(\R^m)\}$.  In these cases, note that $$ B^n(B+1)^{-n-\alpha-1}f(s)=\frac{(-q)^n(s)}{(1-q(s))^{n+\alpha+1}}f(s), \qquad f\in C_0(\R^m), \quad s\in \R^m,$$ and $$S_q(t)f=\displaystyle\sum_{n=0}^{\infty}\frac{(-q)^n}{(1-q)^{n+\alpha+1}}f\,L_n^{(\alpha)}(t),\qquad f\in D(B),\quad t>0.$$

Some examples are the Fourier transforms of the Gaussian and Poisson semigroups, $q(s)=-\lVert s\rVert^2$ and $q(s)=-\lVert s\rVert$ (\cite[p.69]{Nagel}); or subordinated semigroups to them with $q(s)=-\text{log}(1+ \lVert s\rVert^2)$ and \mbox{$q(s)=-\text{log}(1+\lVert s\rVert),$} studied in details in \cite{Campos}.

To finish this subsection we are interested in identifying the functions $a_n$ in the cases of Poisson and Gaussian semigroup. We denote by \mbox{$\mathcal{F}: L^1(\R^m)\to C_0(\R^m)$} the usual Fourier transform, defined by
$\displaystyle \mathcal{F}(f)(s):=\int_{\R^m} e^{-i s\cdot u}f(u)\,du,$
 with $s\cdot u$ the inner product in $\R^m.$ For the Poisson semigroup, we have $\mathcal{F}(a_n)(s)=\varphi_{n,\alpha}(\lVert s\rVert)$ for $s\in \R^m$ and \mbox{$\mathcal{F}(a_n)\in L^1(\R^m)$} for $\alpha>m-1$ and belongs to $L^2(\R^m)$ for $\alpha>\frac{m-2}{2}$, see functions $\varphi_{n,\alpha}$ defined in (\ref{definit}). For the Gaussian semigroup, $\mathcal{F}(a_n)(s)=\varphi_{n,\alpha}(\lVert s\rVert^2)$  for $s\in \R^m$ and \mbox{$\mathcal{F}(a_n) \in L^1(\R^m)$} for $\alpha>\frac{m-2}{2}$ and belongs to $L^2(\R^m)$ for $\alpha>\frac{m-4}{4}.$ To show both results, we use spherical coordinates and apply Theorem \ref{phi}\ (i). Note that in these cases $a_n$ is a radial function.  Then $\mathcal{F}(a_n)=2\pi\mathcal{F}^{-1}(a_n)$ and $$a_n(s)=\frac{C_m}{(2\pi)^m}\int_0^{\infty}\mathcal{F}(a_n)(r)j_m(\lVert s\rVert r)\,r^{m-1}\,dr,\qquad s\in \R^m,$$ where $C_m$ is the area of the unit $(m-1)$-dimensional sphere and $j_m$
%=\frac{\int_0^{\phi}e^{ik\cos\theta}\sin^{m-2}\theta\,d\theta}{\int_0^{\phi}\sin^{m-2}\theta\, d\theta}$
is the spherical Bessel function for $m\geq2$, see more details in \cite[p.133]{Dym}. In the particular case of $m=1,$ we have that $$a_n(s)=\frac{1}{2\pi}\int_0^{\infty}e^{i s r}\mathcal{F}(a_n)(r)\,dr+\frac{1}{2\pi}\int_0^{\infty}e^{- i s r}\mathcal{F}(a_n)(r)\,dr,$$ since $\mathcal{F}(a_n)$ is an even function. For $\alpha>0$  and the Poisson semigroup, we have that \begin{eqnarray*}
a_n(s)&=&\frac{1}{2\pi}\biggl(\displaystyle\lim_{\lambda\to i s}\mathcal{L}(\varphi_{n,\alpha})(\lambda)+\displaystyle\lim_{\lambda\to - i s}\mathcal{L}(\varphi_{n,\alpha})(\lambda)\biggr) \\
&=&\frac{n!}{2\pi}\biggl((-is)^{\frac{\alpha-1}{2}}e^{\frac{-i s}{2}}W_{-n-\frac{(1+\alpha)}{2},-\frac{\alpha}{2}}(-i s)+(is)^{\frac{\alpha-1}{2}}e^{\frac{i s}{2}}W_{-n-\frac{(1+\alpha)}{2},-\frac{\alpha}{2}}(i s)\biggr),
\end{eqnarray*}
for $s\in \R$ and $W_{k,\mu}$ is the Whittaker function; for the Gaussian semigroup, $$a_n(s)=\frac{1}{2\pi}\biggl(\displaystyle\lim_{\lambda\to i s}\mathcal{L}(\varphi_{n,\alpha}(t^2))(\lambda)+\displaystyle\lim_{\lambda\to -i s}\mathcal{L}(\varphi_{n,\alpha}(t^2))(\lambda)\biggr),\qquad s\in \R.$$
To calculate $\mathcal{L}(\varphi_{n,\alpha}(t^2))(\lambda)$ with  $\lambda \in \C_+$, note that \begin{eqnarray*}
\mathcal{L}(\varphi_{n,\alpha}(t^2))(\lambda)=\frac{1}{\sqrt{\pi}}\int_0^{\infty}e^{-\frac{\lambda^2}{4 u^2}}\mathcal{L}(\varphi_{n,\alpha})(u^2)\,du
=\frac{n!}{\sqrt{\pi}}\int_0^{\infty}e^{-\frac{\lambda^2}{4 u^2}}u^{\alpha-1}e^{\frac{u^2}{2}}W_{-n-\frac{(1+\alpha)}{2},-\frac{\alpha}{2}}(u^2)\,du,
\end{eqnarray*}
where we have used Laplace transform properties and Remark \ref{whittaker}.

\subsection{Differentiable and holomorphic semigroups}

So far, we have considered $C_0$-semigroups defined in $[0,\infty).$ If we consider differentiable or holomorphic semigroups with certain growth assumptions, some previous results will be improved.

A  $C_0$-semigroup $(T(t))_{t\ge 0}$ is called immediately differentiable if the orbit $t\mapsto T(t)x$ is differentiable for $t>0$, see \cite[Definition II.4.1]{Nagel}.  A straightforward consequence of Theorem \ref{loc} (iii) is the following corollary. A similar result seems that it may hold in UMD spaces for every uniformly bounded $C_0$-semigroup, see Remark \ref{UMD}.

\begin{corollary}Let $-A$ be the infinitesimal generator of a immediately differentiable uniformly bounded   $C_0$-semigroup $(T(t))_{t\geq 0}$ in a Banach space $X.$ Then
$$T(t)x=\sum_{n=0}^{\infty}A^n(A+1)^{-n-\alpha-1}x\,L^{(\alpha)}_n(t),\qquad t>0, \quad  x\in X.$$

\end{corollary}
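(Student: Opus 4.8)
The plan is to deduce the corollary directly from Theorem \ref{loc} (iii), whose only missing hypothesis in the present setting is that the orbit $T(\cdot)x$ be differentiable on $(0,\infty)$ for \emph{arbitrary} $x\in X$ rather than merely for $x\in D(A)$. But this is exactly the defining property of an immediately differentiable semigroup: for every $x\in X$ the map $t\mapsto T(t)x$ is differentiable on $(0,\infty)$, with $\frac{d}{dt}T(t)x=-AT(t)x$ and $T(t)x\in D(A)$ for $t>0$ (see \cite[Definition II.4.1 and Lemma II.4.2]{Nagel}). So the real content is to rerun the proof of Theorem \ref{loc} (iii) with $D(A)$ replaced by $X$.

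First I would fix $\alpha>-1$ and $x\in X$, and note that uniform boundedness $\lVert T(t)\rVert\le M$ gives
$$
\int_0^{\infty}e^{-t}t^{\alpha}\lVert T(t)x\rVert^2\,dt\le M^2\lVert x\rVert^2\int_0^{\infty}e^{-t}t^{\alpha}\,dt=M^2\lVert x\rVert^2\,\Gamma(\alpha+1)<\infty,
$$
so the square-integrability hypothesis of Theorem \ref{upesvect} is met. Second, since the semigroup is immediately differentiable, the function $f:=T(\cdot)x:(0,\infty)\to X$ is differentiable at every $t>0$, hence continuous there, so all remaining hypotheses of Theorem \ref{upesvect} hold. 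Third, I would invoke Theorem \ref{upesvect} to conclude that $\sum_{n=0}^{\infty}c_n(T(\cdot)x)L_n^{(\alpha)}(t)$ converges pointwise to $T(t)x$ for every $t>0$, where $c_n(T(\cdot)x)=\int_0^{\infty}\ell_n^{(\alpha)}(t)T(t)x\,dt$. Finally, Theorem \ref{loc} (i) identifies these coefficients as $c_n(T(\cdot)x)=A^n(A+1)^{-n-\alpha-1}x$ for all $x\in X$ (that part of Theorem \ref{loc} already holds for arbitrary $x\in X$, not just $x\in D(A)$), which yields exactly the asserted expansion.

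There is essentially no obstacle here: the corollary is a genuine corollary. The one point to be careful about is making explicit that ``immediately differentiable'' is the precise ingredient that upgrades $x\in D(A)$ to $x\in X$ in the argument of Theorem \ref{loc} (iii) — in other words, that differentiability of the orbit on $(0,\infty)$, and not membership in $D(A)$, is all that Theorem \ref{upesvect} requires of $f=T(\cdot)x$. One could also remark, for context, that without some such smoothing hypothesis the expansion can fail at $t>0$ for general $x$, since $T(\cdot)x$ need only be continuous at $0$ and one needs pointwise differentiability on the open half-line; this is the role played by the reference to Remark \ref{UMD}, where a UMD-space substitute is indicated. I would keep the write-up to three or four lines, citing Theorem \ref{upesvect}, Theorem \ref{loc} (i), and \cite[Definition II.4.1]{Nagel}.
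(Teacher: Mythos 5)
Your proposal is correct and is exactly the argument the paper intends: the paper states the corollary as a ``straightforward consequence of Theorem \ref{loc} (iii)'' without writing out details, and the details are precisely what you supply --- immediate differentiability makes $T(\cdot)x$ differentiable on $(0,\infty)$ for every $x\in X$, so Theorem \ref{upesvect} applies, and Theorem \ref{loc} (i) identifies the coefficients $c_n(T(\cdot)x)=A^n(A+1)^{-n-\alpha-1}x$ for arbitrary $x\in X$. Nothing is missing.
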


A holomorphic semigroup $(T(z))_{z\in\C_+}$ is said of type $HG_{\beta}$ if $\displaystyle\lVert T(z) \rVert\leq C_{\nu}\biggl(\frac{|z|}{\Re z}\biggr)^{\nu},$ for $\Re z>0$ and every $\nu>\beta,$ see definition in \cite{Gale}. Classical holomorphic semigroup as the Gaussian and the Poisson semigroup satisfy the $HG_{\beta}$ condition for some $\beta>0$. Moreover, a large amount of different semigroups as those generated by $-\sqrt{\mathcal{L}},$ where $\mathcal{L}$ is the sub-Laplacian in $L^p(\mathbb{G})$ and $\mathbb{G}$ a Lie Group, or generated by $-(-\log(\lambda)+H),$ where $H$ is the strongly elliptic operator affiliated with a strongly continuous representation of a Lie Group into a Banach space, satisfy the $HG_{\beta}$ condition and can be found in \mbox{\cite[Section 5]{Gale2}.}

\begin{theorem}\label{Holomorphic} Let $-A$ be the infinitesimal generator of a uniformly bounded holomorphic $C_0$-semigroup $(T(z))_{z\in\C_+}$ of type $HG_{\beta}$ for some $\beta\ge 0$ in a Banach space $X.$

\begin{itemize}
\item[(i)] For $\alpha >-1$, there exists $C_{\alpha,\beta} >0$ such that
 $$\lVert A^n(A+1)^{-n-\alpha-1}x \rVert\leq \frac{C_{\alpha,\beta}\Vert x\Vert}{n^{\alpha+1}}, \qquad x\in X,$$

\item[(ii)] For $\alpha>0$ and $t>0,$  $$\lVert T(t)x-\displaystyle\sum_{n=0}^{m}A^n(A+1)^{-n-\alpha-1}x\,L^{(\alpha)}_n(t) \rVert\leq \frac{C_{\alpha,\beta,t}}{m^{\frac{\alpha}{2}}}\Vert x\Vert, \qquad x\in X.$$
\end{itemize}

\end{theorem}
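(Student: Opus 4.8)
The plan is to exploit the integral representation from Theorem \ref{loc}~(i), namely $A^n(A+1)^{-n-\alpha-1}x=\int_0^\infty \ell_n^{(\alpha)}(t)T(t)x\,dt$, together with the $HG_\beta$ growth bound $\lVert T(z)\rVert\le C_\nu(|z|/\Re z)^\nu$ for $\nu>\beta$, which allows us to deform the contour of integration and gain decay. For part (i), I would first note that for a holomorphic semigroup the orbit $T(\cdot)x$ extends holomorphically to $\C_+$, so the real integral $\int_0^\infty \ell_n^{(\alpha)}(t)T(t)x\,dt$ can be rewritten using the analyticity of $\ell_n^{(\alpha)}$ (it is $t^\alpha e^{-t}$ times a polynomial, hence entire after the branch cut is fixed) by rotating the ray $(0,\infty)$ to a ray $\{re^{i\theta}:r>0\}$ with $|\theta|<\pi/2$. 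On such a ray one has $\lVert T(re^{i\theta})\rVert\le C_\nu(\cos\theta)^{-\nu}$, a constant in $r$, so the estimate reduces to controlling $\int_0^\infty |\ell_n^{(\alpha)}(re^{i\theta})|\,dr$ — equivalently an $L^1$-type norm of $\ell_n^{(\alpha)}$ along a rotated ray. The cleanest route, however, is probably to avoid rotating $\ell_n^{(\alpha)}$ and instead use the \emph{resolvent} side: by Theorem \ref{loc}~(ii) (valid for $\alpha>0$; for $-1<\alpha\le 0$ one first passes to $\alpha+1$ and uses Proposition \ref{propLag}~(i)) we have $A^n(A+1)^{-n-\alpha-1}x=\int_0^\infty\varphi_{n,\alpha}(t)(t+A)^{-1}x\,dt$ up to the differences coming from (i) vs (ii); then the bound $\lVert(t+A)^{-1}\rVert\le M/t$ combined with $\lVert\varphi_{n,\alpha}\rVert_1$ would give only $n^{-\alpha}$, which is not enough, so the holomorphy must be used more sharply.

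A more robust approach for (i): write $A^n(A+1)^{-n-\alpha-1}x=\int_0^\infty \ell_n^{(\alpha)}(t)T(t)x\,dt$ and integrate by parts $k$ times using Proposition \ref{propLag}~(vi), $\frac{d^k}{dt^k}\ell_n^{(\alpha)}=\ell_{n+k}^{(\alpha-k)}$, picking $k$ large; the boundary terms vanish (the derivatives of $\ell_{n-k}^{(\alpha+k)}$ vanish at $0$ and $\infty$, as in the proof of Theorem \ref{rate}~(i)) and we land on $\int_0^\infty \ell_{n-k}^{(\alpha+k)}(t)\,T^{(k)}(t)x\,dt$. For a holomorphic semigroup of type $HG_\beta$ one has the derivative bound $\lVert T^{(k)}(t)\rVert\le C_k t^{-k}$ with $C_k$ depending on $k,\beta$ (Cauchy estimates on a disc of radius $t/2$, using the $HG_\beta$ growth on that disc). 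Plugging in and using Theorem \ref{main2}~(ii), $\lVert\ell_{n-k}^{(\alpha+k)}\rVert_1\le C_{\alpha,k}(n-k)^{-(\alpha+k)/2}$, together with the weight $t^{-k}$ absorbed via a careful splitting of the integral, yields a bound decaying like $n^{-(\alpha+k)/2+(\text{something})}$. Choosing $k$ appropriately (the natural scaling suggests $k$ comparable to a fixed integer exceeding $2$, after which one optimizes) should produce the sharp exponent $n^{-(\alpha+1)}$. Comparing with Theorem \ref{phi}~(iii), where $\lVert\varphi_{n,\alpha}\rVert_{(j)}\le C_{j,\alpha}n^{-(\alpha+1)}$, strongly suggests that the right statement is obtained by realizing $A^n(A+1)^{-n-\alpha-1}x$ as $\varphi_{n,\alpha}$ applied in an $AC^{(j)}$ functional calculus adapted to $HG_\beta$ semigroups (this is exactly the calculus of \cite{Gale}); that is, I expect the intended proof is: $HG_\beta$ semigroups admit a bounded $AC^{(j)}$-functional calculus for $j>\beta$, and $\lVert A^n(A+1)^{-n-\alpha-1}\rVert\le C\,\lVert\varphi_{n,\alpha}\rVert_{(j)}\le C_{\alpha,\beta}n^{-(\alpha+1)}$ by Theorem \ref{phi}~(iii).

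For part (ii), the plan is routine once (i) is in hand: by Theorem \ref{loc}~(iii),
\[
\Bigl\lVert T(t)x-\sum_{n=0}^{m}A^n(A+1)^{-n-\alpha-1}x\,L_n^{(\alpha)}(t)\Bigr\rVert\le\sum_{n=m+1}^{\infty}\lVert A^n(A+1)^{-n-\alpha-1}x\rVert\,|L_n^{(\alpha)}(t)|,
\]
and then apply (i) to bound the operator norms by $C_{\alpha,\beta}\lVert x\rVert\,n^{-(\alpha+1)}$ and the Muckenhoupt-type estimate \eqref{mucklag}, $|L_n^{(\alpha)}(t)|\le C_t\,n^{\alpha/2}$ for $n\ge n_0$, to bound the polynomials. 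The tail sum becomes $C_{\alpha,\beta,t}\lVert x\rVert\sum_{n=m+1}^\infty n^{-(\alpha+1)+\alpha/2}=C_{\alpha,\beta,t}\lVert x\rVert\sum_{n=m+1}^\infty n^{-1-\alpha/2}$, which for $\alpha>0$ converges and is $O(m^{-\alpha/2})$, giving exactly the claimed rate; one must be slightly careful that $t>0$ is fixed so that $C_t$ from \eqref{mucklag} is harmless, and that the first finitely many terms $n\le n_0$ are absorbed into $C_{\alpha,\beta,t}$. The main obstacle is part (i): extracting the sharp $n^{-(\alpha+1)}$ rather than the crude $n^{-\alpha}$ or $n^{-\alpha/2}$ bounds — this is precisely where the $HG_\beta$ hypothesis (equivalently, the bounded $AC^{(j)}$ calculus and Theorem \ref{phi}~(iii)) has to be brought in, rather than mere uniform boundedness of the semigroup.
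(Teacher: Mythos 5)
Your proposal lands on exactly the paper's argument: part (i) is proved by viewing $A^n(A+1)^{-n-\alpha-1}=\varphi_{n,\alpha}(A)$ through the bounded $AC^{(j)}$ functional calculus of \cite{Gale} for $HG_\beta$ semigroups (the paper takes $j>\beta+1$) combined with Theorem \ref{phi}~(iii), and part (ii) is the same tail estimate via Theorem \ref{loc}~(iii) and the Muckenhoupt bound \eqref{mucklag} giving $\sum_{n>m}n^{-1-\alpha/2}=O(m^{-\alpha/2})$. The exploratory detours (contour rotation, repeated integration by parts) are unnecessary, but the route you ultimately identify as ``the intended proof'' is precisely the one in the paper.
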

\begin{proof} (i)  By Theorem \ref{phi} (iii), $\varphi_{n,\alpha}\in AC^{(j)}$ for $j,n\in\N,$ and $\alpha>-1$. We apply holomorphic functional calculus $f\mapsto f(A)$ defined in \cite[Theorem 6.2.]{Gale} to get that
 $$\lVert A^n(A+1)^{-n-\alpha-1}x \rVert= \lVert \varphi_{n,\alpha}(A)x \rVert\leq C\Vert x\Vert \,\Vert \varphi_{n,\alpha}\Vert_{(j)}\leq \frac{C_{\alpha,\beta}\Vert x\Vert}{n^{\alpha+1}},\qquad x\in X,
 $$
 for all $j>\beta+1$ and $x\in X$. (ii) For $t>0$ there is a $n_0\in\N$ such that the inequality $|L_n^{(\alpha)}(t)|\leq C_tn^{\frac{\alpha}{2}}$ holds for  $n\geq n_0$, see formula (\ref{mucklag}).
 We use part  (i) to get that $$
\lVert T(t)x-\displaystyle\sum_{n=0}^{m}A^n(A+1)^{-n-\alpha-1}x\,L^{(\alpha)}_n(t)\rVert \leq \displaystyle\sum_{n=m+1}^{\infty}\frac{C_{\alpha,\beta,t}}{n^{1+\frac{\alpha}{2}}}\Vert x\Vert\leq\frac{C_{\alpha,\beta,t}}{m^{\frac{\alpha}{2}}}\lVert x\rVert, \quad x\in X,$$
%&\leq & C_{\alpha,\beta,t} \lVert x\rVert\displaystyle\sum_{n=m+1}^{\infty}\int_{n-1}^n\frac{dx}{x^{1+\frac{\alpha}{2}}}, \quad x\in X \\
and we conclude the result.
\end{proof}

\subsection*{Acknowledgements}

The authors wish to thank O. Blasco, M. L. Rezola and L. Roncal for the pieces of advice and assistance provided in order to obtain some previous results.

\end{document}